\documentclass[11pt,a4paper,reqno]{amsart}

\usepackage[utf8]{inputenc}
\usepackage[T1]{fontenc}
\usepackage{amsmath,amssymb,amsthm}
\usepackage{mathtools}
\usepackage{graphicx}
\usepackage{hyperref}
\usepackage{xcolor}
\usepackage[margin=1in]{geometry}
\usepackage{mathrsfs}
\usepackage{enumitem}

\newtheorem{theorem}{Theorem}[section]
\newtheorem{lemma}[theorem]{Lemma}
\newtheorem{proposition}[theorem]{Proposition}
\newtheorem{corollary}[theorem]{Corollary}

\theoremstyle{definition}
\newtheorem{definition}[theorem]{Definition}

\newtheorem{remark}[theorem]{Remark}

\theoremstyle{plain}

\numberwithin{equation}{section}

\newcommand{\R}{\mathbb{R}}
\newcommand{\C}{\mathbb{C}}
\newcommand{\Z}{\mathbb{Z}}
\newcommand{\N}{\mathbb{N}}

\newcommand{\norm}[1]{\left\lVert #1 \right\rVert}

\title[Control for the Schr\"{o}dinger equation on polyhedra]{Observability and controllability for the Schr\"{o}dinger equation on polyhedra}

\author[Wei Qu]{Wei Qu}
\address{School of Mathematics and Statistics, Huazhong University of Science and Technology, 430074, Wuhan, China}
\email{quwei@hust.edu.cn}

\author[Zhiwen Duan]{Zhiwen Duan}
\address{School of Mathematics and Statistics, Huazhong University of Science and Technology, 430074, Wuhan, China}
\email{duanzhw@hust.edu.cn}

\begin{document}
	
	\begin{abstract}
		We consider the observability and controllability for the Schr\"{o}dinger equation on polyhedra. We show that for the Schr\"{o}dinger equation on convex polyhedra in $\mathbb{R}^d\,(d\geq2)$, when the initial data possess $H^s\,(s>d/2)$ regularity, observability and controllability hold, with the control region being an arbitrary nonempty open neighborhood of the singular set of the boundary, which is the same as the control region for eigenfunctions. Our proof of observability is a concrete adaptation of the method in the Burq--Zworski black box control work, where the observability inequality is proved via a resolvent estimate. Hence, before proving observability, we present an observability resolvent estimate on polyhedra. The idea of the proof of this resolvent estimate comes from the case of eigenfunction concentration on polyhedra considered by Ceki\'c--Georgiev--Mukherjee, which uses semiclassical measures and relies closely on the dynamical properties of the billiard flow in polyhedra. Finally, by a standard HUM argument, we show that observability implies controllability.
	\end{abstract}
	
	\maketitle
	
	\section{Introduction}
	\label{sec: Introduction}
	In this paper we consider the observability and controllability for the Shr\"odinger equation on polyhedra in $\R^d\,(d\geq2):$
	\begin{equation}
		\left\{
		\begin{aligned}
			i\partial_tu(t,z)
			&+\Delta u(t,z)=0,\quad (t,z)\in\R_t\times P,\\
			u(0,z)
			&=u_0\in L^2(P),\quad u|_{\R_t\times\partial P}=0,
		\end{aligned}
		\right.
	\end{equation}
	where $P\subset\R^d$ is a convex polyhedron. We say that the quantum system described by the Schrödinger equation on a polyhedron $P$ is \emph{observable} if solutions of the Schrödinger equation satisfy the following observability inequality:
	\begin{equation}
		\norm{u(0,\cdot)}_{L^2(P)}^{2}\leq C\int_{0}^{T}\norm{u(t,\cdot)}_{L^2(\omega)}^2dt,
	\end{equation}
	where $\omega\subset P$ is a measurable set referred to as the \emph{control region}, and $C=C(\omega,T)>0$ is a constant depending on $\omega$ and $T.$ Intuitively, the observability inequality means that the quantum particle described by the Schrödinger equation must leave a trace on the control region $\omega$ over the time interval $[0,T]$. From the perspective of control theory, the observability inequality implies that when we know the observation data of the system on the region $\omega$ over the time interval $[0,T],$ we can uniquely determine the state of the system at the initial time. In control theory, the observability is often closely related to the controllability. A system is said to be \emph{controllable} if, for any two given state functions $u_0$ and $u_1$ in the state space, we can find a control term applied to the control region such that when the system evolves with $u_0$ as the initial state, it exactly reaches the state $u_1$ at some time $T.$ Expressed in terms of equations, this is:
	For any $u_0,u_1\in L^2(P),$ does there exist a control function $g\in L^2([0,T]\times P)$ such that the soution $u$ of the following equation
	\begin{equation}
		\left\{
		\begin{aligned}
			i\partial_{t}u(t,z)&+\Delta u(t,z)=\mathbf{1}_{[0,T]\times\omega}g(t,z),\\
			u(0,z)&=u_0,\quad u|_{\R_t\times\partial P}=0
		\end{aligned}
		\right.
	\end{equation}
	satisfies $u(T,z)\equiv u_1$? By the linearity and time reversibility of the Schr\"{o}dinger equation, controllability is equivalent to null controllability, which means $u_1\equiv0$ above (see for example \cite{Laur14}). By a classical Hilbert Uniqueness Method (HUM) argument we can see that observability and null controllability are equivalent.
	
	There is extensive literature on the observability and control for the Schrödinger equation. Pioneering work on the observability and control for the Schrödinger equation was given by Lebeau in \cite{Lebeau92} under the following \emph{Geometric Control Condition (GCC)}:
	\begin{quote}
		\centering
		There exists $L=L(\omega)>0$ such that\\
		every geodesic of length $L$ intersects the control region $\omega.$
	\end{quote}
	
	The GCC originally proposed in \cite{BLR92} was proved to be sufficient but not necessary for the observability of the Schr\"odinger equation. A classic case in which the Schrödinger equation is observable while the control region does not satisfy the geometric control condition is the flat torus. In general, any nonempty open set on flat torus can serve as a control region for which the observability inequality holds; see Jaffard~\cite{Jaffard} and Haraux~\cite{Haraux} for the two-dimensional case and Komornik~\cite{Komornik} for higher dimensions. Burq--Zworski~\cite{BZ12} proved this result for Schr\"{o}dinger operators with smooth potentials on two-dimensional torus, and later Bourgain--Burq--Zworski~\cite{BBZ13} proved the case with $L^2$-potentials. Anantharaman--Maci\`a~\cite{AM14} extended this result to the case of higher-dimensional tori with some class of potentials including continuous ones. Also, Burq--Zworski~\cite{BZ19} presented another result showing that on two-dimensional torus any set of positive Lebesgue measure can be used for observability. Another important example where the control region does not satisfy the geometric control condition is the unit disk in $\R^2.$ In this case, Anantharaman--Léautaud--Macià~\cite{ALM16} proved that the observability inequality holds if and only if the control region is an arbitrary nonempty open set touching the boundary.
	
	However, the geometric control condition has been proved to be necessary for observability on Zoll manifolds (i.e., manifolds all of whose geodesics are closed) due to the strong stability of geodesic flow on such manifolds (see \cite{Maci09}\cite{Maci11}). In this case, if we choose a control region that does not satisfy the geometric control condition, then there exists a geodesic that does not pass through this region, together with a sequence of Gaussian beam solutions whose $L^2$-mass concentrates around that geodesic in the high-frequency limit, which will destroy the observability.
	
	For the case of negatively curved manifolds, Anantharaman--Rivi\`ere~\cite{AR12} gave a result on the controllability of the Schr\"{o}dinger equation on a compact Riemannian manifold of constant negative curvature under an entropy condition, namely, that the set of uncontrolled trajectories is ``thin''. Dyatlov--Jin~\cite{DJ18} proved that on a compact Riemannian surface of constant negative curvature, any nonempty open set achieves global controllability in the $L^2$-sense, while Jin~\cite{J18} used this result to prove an control result for the time-dependent Schr\"{o}dinger equation on the same manifold, with the same control region as in \cite{DJ18}. Later, Dyatlov--Jin--Nonnenmacher~\cite{DJN} established the same result on compact Riemannian surface of variable negative curvature.
	
	In the two classes of examples mentioned above where the observability inequality holds but the control region fails to satisfy the GCC, the geodesic flows are all completely integrable in phase space. There exists another class of examples in which the observability inequality holds and the control region does not satisfy the GCC, but whose geodesic flow is not completely integrable, namely, partially rectangular domains (including the Bunimovich stadium). For domains of this type, the control region is an arbitrary open neighborhood of the lateral wings. Due to the existence of billiard trajectories bouncing between the two parallel sides of the rectangular part, the control region does not satisfy the geometric control condition (see\cite{BZ04}\cite{BZ05}\cite{Mar06}). Using this result, Hassell--Hillairet--Marzuola~\cite{HHM09} gave a result on the concentration of eigenfunctions of the Laplacian on polygonal domains, showing that the $L^2$-mass of eigenfunctions cannot concentrate away from any neighborhood of the set of vertices. Ceki\'c--Georgiev--Mukherjee~\cite{CGM} extended this result to higher-dimensional convex polyhedra, where the control region is an arbitrary nonempty neighborhood of the singular set of the boundary. In both of the above cases, it is necessary to analyze the dynamical properties of the billiard flow inside the domain.
	
	In this paper, we present results on the observability and controllability for the time-dependent Schr\"{o}dinger equation on convex polyhedra in $\R^d\,(d\geq2),$ with the same control region as in \cite{CGM}. For a convex polyhedron $P$ in $\R^d$, let $\mathcal{S}$ be the singular set of the boundary $\partial P$ (i.e., the non‑smooth part of the boundary; see Section~\ref{sec: Billiard Dynamics on Polyhedra}). Then we have the following observability result:
	\begin{theorem}
		\label{thm1.1}
		Suppose that $U$ is a nonempty open neighborhood of $\mathcal{S}$ inside $P.$ Then there exists $T_0>0$ such that for any $T>T_0$ and any solution of
		\begin{equation}
			\left\{
			\begin{aligned}
				i\partial_tu(t,z)
				&+\Delta u(t,z)=0,\quad (t,z)\in\R_t\times P,\\
				u(0,z)
				&=u_0,\quad u|_{\R_t\times\partial P}=0,
			\end{aligned}
			\right.
		\end{equation}
		where $u_0\in H^s(P)\,(s>d/2),$ there exists a constant $C=C(U,T)>0$ such that
		\begin{equation}
			\label{eq: obsr in P}
			\norm{u_0}^2_{L^2(P)}\leq C\int_{0}^{T}\norm{u(t,z)}^2_{L^2(U)}dt.
		\end{equation}
	\end{theorem}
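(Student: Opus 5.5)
The proof combines a semiclassical resolvent estimate (the stationary problem) with a Burq--Zworski style passage from resolvent estimates to time-dependent observability.

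\textbf{Step 1: resolvent estimate.} The first target is the observability resolvent estimate: there exist $C>0$ and $h_0>0$ such that for all $0<h<h_0$ and all $v\in D(\Delta_D)$,
\begin{equation*}
\norm{v}_{L^2(P)}\leq C\Big(h^{-2}\norm{(-h^2\Delta-1)v}_{L^2(P)}+\norm{v}_{L^2(U)}\Big).
\end{equation*}
I would argue by contradiction, following Ceki\'c--Georgiev--Mukherjee. Suppose there are sequences $h_n\to0$ and $v_n$ with $\norm{v_n}_{L^2(P)}=1$, $\norm{v_n}_{L^2(U)}=o(1)$, and $\norm{(-h_n^2\Delta-1)v_n}=o(h_n^2)$. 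Extract a semiclassical defect measure $\mu$ on the cosphere bundle over the interior. Then:
\begin{itemize}
\item[(i)] $\mu$ has total mass $1$: no mass escapes to the boundary away from $\mathcal S$, by standard Dirichlet boundary estimates for quasimodes.
\item[(ii)] $\mu$ vanishes over $U$.
\item[(iii)] $\mu$ is invariant under the billiard flow away from $\mathcal S$. The $o(h^2)$ remainder, rather than $o(h)$, is what gives invariance with reflection at the smooth faces.
\end{itemize}
The dynamical input from the billiard section is that any billiard trajectory in a convex polyhedron avoiding a neighborhood of $\mathcal S$ is periodic-type and lies in a family of parallel trajectories. Such families form cylinders whose boundaries hit $\mathcal S$. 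Invariance then forces $\mu$ onto such cylinders. A second-microlocalization or commutator argument in the transverse direction, as in Burq--Zworski's partially rectangular case and in CGM, shows that mass on a cylinder cannot avoid the neighborhood of its boundary, which meets $U$. This is the $o(h^2)$ part of the argument. Hence $\mu=0$, contradicting (i).

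\textbf{Step 2: from resolvent to high-frequency observability.} With $\lambda=h^{-1}$ the estimate reads $\norm{v}\leq C(\norm{(-\Delta-\lambda^2)v}+\norm{v}_U)$, i.e.\ the resolvent estimate with a loss of zero powers in $\lambda^2$. For the Schr\"odinger group, the Burq--Zworski black box argument (or Miller's resolvent criterion) turns this into observability for data frequency-localized to $\lambda^2\gg1$. Concretely, take $w=\chi(t)u$ with $\chi\in C_c^\infty(0,T)$, Fourier transform in $t$, apply the estimate at each frequency $\tau$ (where $-\Delta-\tau$ has spectral parameter $\tau$), and integrate via Plancherel. The commutator term $\chi'u$ is absorbed only after a spectral cutoff to high frequencies. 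This gives
\begin{equation*}
\norm{\Pi_{\geq N}u_0}^2\leq C\int_0^T\norm{u}_{L^2(U)}^2\,dt+\text{lower order}.
\end{equation*}

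\textbf{Step 3: low frequencies and the $H^s$ hypothesis.} The low-frequency part is finite dimensional. It is handled by the usual compactness-uniqueness argument: eigenfunctions vanishing on $U$ vanish identically by unique continuation, which also removes the lower-order term in the inequality for $T>T_0$. The assumption $u_0\in H^s$ with $s>d/2$ enters where I need $u$ continuous (or the frequency tails summable in $L^\infty$) to justify the measure extraction and boundary traces near the singular set, where the domain is not smooth. I would simply invoke it where the Fourier-in-time argument needs convergence; the final constant involves only $L^2$ norms.

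The main obstacle is Step 1(iii): proving that invariant measures supported on a cylinder of parallel periodic trajectories must charge the neighborhood of $\mathcal S$. This requires the refined $o(h^2)$ quasimode analysis near the edges of the polyhedron.
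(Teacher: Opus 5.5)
Your three-step architecture matches the paper's. You prove the resolvent estimate $\norm{u}_{L^2}\le C(\norm{u}_{L^2(U)}+\norm{(-\Delta-\lambda)u}_{L^2})$ by contradiction with semiclassical measures and the CGM tube analysis, then use the Burq--Zworski Fourier-in-time argument, then compactness--uniqueness with unique continuation.

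\textbf{The absorption in Step 2 is wrong as written.} The resolvent constant $C$ does not improve with $\tau$. Integrating the estimate for $\hat w(\tau)$ and using Plancherel gives $\norm{\chi}_{L^2}^2\norm{u_0}^2\le 2C^2\norm{\chi'}_{L^2}^2\norm{u_0}^2+2C^2\int\chi^2\norm{u}_{L^2(U)}^2\,dt+\dots$. A spatial cutoff $\Pi_{\geq N}$ cannot help here, because $\chi'u$ carries exactly the same spatial frequencies as $\chi u$. Absorption needs $2C^2\norm{\chi'}_{L^2}^2<\norm{\chi}_{L^2}^2$. By Wirtinger's inequality $\norm{\chi'}_{L^2}\ge(\pi/T)\norm{\chi}_{L^2}$, this is a condition on $T$. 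The paper takes $\chi(t)=\psi(t/T)$, so the ratio becomes $T^{-2}\norm{\psi'}^2_{L^2}/\norm{\psi}^2_{L^2}$, and then chooses $T$ large. This, together with the integration-by-parts term $J_2$, is where $T_0$ comes from.

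The spectral cutoff $\chi(\Delta/\rho)$ has a different job. It treats the bounded range $\langle\tau\rangle<\rho/2$, where the resolvent estimate is unavailable, and leaves the compact error $C_\rho\norm{u_0}^2_{H^{-2}}$. Compactness--uniqueness then removes that error at whatever time the weak inequality holds. So compactness--uniqueness is not the source of $T_0$, contrary to what you say in Step 3.

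\textbf{Step 1 needs two corrections.} First, invariance of $\mu$ needs only $o(h)$; the $o(h^2)$ is used only in the transverse estimate. The paper obtains invariance on the double $D$ after odd extension, which also disposes of the faces.

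Second, the fact you call the main obstacle is false for invariant measures alone: a measure carried by one closed orbit inside a tube is invariant and avoids $U$. It is a statement about $o(h^2)$ quasimodes, and the paper proves it as follows:
\begin{enumerate}
\item The finite tube condition leaves finitely many directions in $\operatorname{supp}\mu$, so a Fourier multiplier $a(h_nD)$ isolates one tube.
\item Taking $a$ and a cross-sectional cutoff $\chi(x)$ both $\mathscr{R}$-invariant, $w_n=a(h_nD)(\chi u_n)$ solves the Helmholtz equation on the mapping torus with the almost periodic condition $w_n(\mathscr{R}x,0)=w_n(x,L)$.
\item Property (P2') makes the measure vanish where $\nabla\chi\neq0$, so the source tends to $0$ in $H^{-1}_xL^2_y$.
\item CGM's Theorem 5.4 (observability from a collar of $\partial\Omega$) then gives $\norm{w_n}\to0$.
\end{enumerate}
For irrational polyhedra the trajectories in a tube are not individually periodic. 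Your picture of cylinders of periodic orbits must therefore be replaced by this almost periodic setting. This is also where the paper uses $s>d/2$: continuity makes the pullback Bohr almost periodic in $y$. The hypothesis is not needed for extracting $\mu$.
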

	\noindent By a classical HUM argument, we have the following controllability result:
	\begin{theorem}
		\label{thm1.2}
		Suppose that $U$ is a nonempty open neighborhood of $\mathcal{S}$ inside $P.$ Then there exists $T_0>0$ such that for any $T>T_0,$ there exists $g\in L^2([0,T];H^s(P))\,(s>d/2)$ such that the solution of
		\begin{equation}
			\left\{
			\begin{aligned}
				i\partial_{t}u(t,z)&+\Delta u(t,z)=\mathbf{1}_{[0,T]\times U}g(t,z),\quad(t,z)\in\R_t\times P,\\
				u(0,z)&=u_0,\quad u|_{\R_t\times\partial P}=0,
			\end{aligned}
			\right.
		\end{equation}
		where $u_0\in H^s(P),$ satisfies
		\[u(T,\cdot)\equiv0.\]
	\end{theorem}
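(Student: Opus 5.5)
The controllability statement follows from Theorem~\ref{thm1.1} by the Hilbert Uniqueness Method. Fix $T>T_0$ and write $S(t)=e^{it\Delta}$ for the unitary Dirichlet Schr\"odinger group on $L^2(P)$. The group commutes with the Dirichlet Laplacian, so it preserves the spaces $\mathcal D((-\Delta)^{s/2})$, on which $\|(-\Delta)^{s/2}\cdot\|_{L^2}$ is a norm. This works as a substitute for $H^s(P)$, since the data in Theorem~\ref{thm1.1} are taken in $H^s$ compatible with the boundary condition.

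The first step is to reformulate. By Duhamel's formula the controlled solution satisfies
\[
u(T)=S(T)u_0-i\int_0^T S(T-t)\mathbf 1_U g(t)\,dt .
\]
Hence $u(T)=0$ exactly when $u_0=iL_Tg$, where $L_Tg=\int_0^T S(-t)\mathbf 1_U g(t)\,dt$. Null controllability from $H^s$ data is therefore equivalent to the surjectivity of $L_T$ onto the initial-data space. Its adjoint is easily computed: $L_T^*\varphi=\mathbf 1_U S(t)\varphi$, which is exactly the observation operator appearing in \eqref{eq: obsr in P}.

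Next, I would build the HUM functional. For $\varphi_0$ in the data space, let $\varphi(t)=S(t)\varphi_0$ and set
\[
J(\varphi_0)=\tfrac12\int_0^T\|\varphi(t)\|_{L^2(U)}^2\,dt-\operatorname{Re}\langle \varphi_0,\,-iu_0\rangle_{L^2}.
\]
I would choose the pairing so that the control comes out with the stated regularity; one option is to define the HUM operator $\Lambda\varphi_0=\int_0^TS(-t)\mathbf 1_U S(t)\varphi_0\,dt$. Observability \eqref{eq: obsr in P} says exactly that $\langle\Lambda\varphi_0,\varphi_0\rangle\ge C^{-1}\|\varphi_0\|^2$, so $J$ is coercive, continuous and strictly convex. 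Lax--Milgram, or direct minimization, then yields a minimizer $\hat\varphi_0$. The Euler--Lagrange equation gives $\Lambda\hat\varphi_0=-iu_0$, and taking $g=\mathbf 1_U S(t)\hat\varphi_0$ produces $u(T)=0$.

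The main obstacle is regularity: we need $g\in L^2([0,T];H^s)$, and the observability inequality holds only for $H^s$ data. The plan here has three parts.
\begin{enumerate}[label=(\roman*)]
\item Restrict $\Lambda$ to the $H^s$-type space $X_s=\mathcal D((-\Delta)^{s/2})$. Observability yields injectivity and closed range of $\Lambda$ in $L^2$, so $\Lambda$ is invertible on $L^2$.
\item Show that $\Lambda^{-1}$ maps $X_s$ into $X_s$. I would try a commutator argument, in the spirit of Dehman--L\'eautaud--Le Rousseau: replace $\mathbf 1_U$ by a smooth cutoff $\chi$ supported in $U$ and equal to $1$ near a smaller neighborhood of $\mathcal S$. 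This is legitimate because Theorem~\ref{thm1.1} applies to any neighborhood. Then $[(-\Delta)^{s/2},\chi]$ is of lower order.
\item If the commutator route fails to be clean because of the corners, the weaker statement $g\in L^2([0,T];L^2)$ with data in $X_s$ still follows directly from (i), using density of $X_s$ in $L^2$.
\end{enumerate}

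One must also justify that $\chi S(t)\hat\varphi_0$ stays in $H^s$ near the boundary. Since $\chi$ is smooth and $\hat\varphi_0\in X_s$, products of this kind remain in $X_s$ provided $\chi$ respects the Dirichlet condition, which holds because $S(t)$ preserves $X_s$.
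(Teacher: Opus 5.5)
Your HUM skeleton is the paper's: your Gramian $\Lambda$ is its $K=-iR\circ S$, and your control $S(t)\hat\varphi_0$ is its $Lu_0=-iSK^{-1}u_0$. The $L^2$ part is fine. The constant in Theorem~\ref{thm1.1} is uniform and both sides are $L^2$-continuous, so observability extends by density to all of $L^2(P)$, which step (i) needs.

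The gap is step (ii). The only non-formal content of the statement is that $\hat\varphi_0=-i\Lambda^{-1}u_0$ lies in $X_s$ when $u_0$ does. You leave this as something you ``would try'', and (iii) retreats to a weaker theorem. Coercivity of $\langle\Lambda\varphi,\varphi\rangle_{L^2}$ gives Lax--Milgram in $L^2$, not in $X_s$.

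The paper passes over exactly this point. It infers that $K$ is an isomorphism of $H^s(P)$ from positivity in the $L^2$ pairing, which does not follow. Its $S:H^s(P)\to L^2([0,T];H^s(P))$ also tacitly assumes that the Dirichlet group preserves $H^s(P)$, which fails for $s>1/2$ without the compatibility conditions you build into $X_s$. So your diagnosis is sharper than the paper's, but as written the proposal does not prove the statement.

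Your last paragraph is also a non sequitur. Whether multiplication by $\chi$ preserves $X_\sigma$ is a condition on $\chi$ along the faces (for $\sigma>5/2$, e.g.\ $\partial_\nu\chi=0$). It is unrelated to $S(t)$ preserving $X_s$. Moreover, the theorem only asks for $g\in H^s$, not $g\in X_s$.

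The missing step is short, because the commutator is exact. Integrating $\frac{d}{dt}\langle\mathbf 1_US(t)\varphi,S(t)\eta\rangle$ over $[0,T]$ for $\varphi,\eta\in X_2$ gives
\[
\langle\Lambda\varphi,\Delta\eta\rangle=\langle\Lambda\Delta\varphi,\eta\rangle+\langle B\varphi,\eta\rangle,\qquad B:=i\bigl(S(-T)\mathbf 1_US(T)-\mathbf 1_U\bigr).
\]
Hence $\Lambda X_2\subset X_2$ and $[\Delta,\Lambda]=B$, which is bounded on $L^2$ even for the sharp cutoff.

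Now take $\psi\in X_2$, $\varphi=\Lambda^{-1}\psi$ and $\varphi_\epsilon=(1-\epsilon\Delta)^{-1}\varphi$. The identity $[\Lambda,(1-\epsilon\Delta)^{-1}]=-\epsilon(1-\epsilon\Delta)^{-1}B(1-\epsilon\Delta)^{-1}$ yields $\|\Delta\Lambda\varphi_\epsilon\|\le\|\Delta\psi\|+\|B\|\|\varphi\|$. Hence $\|\Delta\varphi_\epsilon\|\le\|\Lambda^{-1}\|\bigl(\|\Delta\psi\|+2\|B\|\|\varphi\|\bigr)$ uniformly in $\epsilon$, so $\varphi\in X_2$.

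Interpolating gives $\Lambda^{-1}X_s\subset X_s$ for $0\le s\le2$. Then $g=S(t)\hat\varphi_0\in C([0,T];X_s)$ works with no cutoff on $g$ at all, since only $\mathbf 1_Ug$ acts. Since $X_2=H^2\cap H^1_0$ on a convex polyhedron, $X_s\subset H^s(P)$ for $s\le2$. This settles $d=2,3$ with $s\in(d/2,2]$ and $u_0\in X_s$.

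Larger $s$ is forced when $d\ge4$. There you should iterate with $\mathbf 1_U$ replaced by $\chi^2$, with $\chi$ smooth and $\operatorname{supp}\chi\subset U$, so that $B$ stays bounded on higher $X_\sigma$. Any smooth $\chi$ works up to $s=4$; beyond that, $\chi$ needs compatibility conditions on the faces. Take $g=\chi^2S(t)\hat\varphi_0$. You must also check separately that $X_s\subset H^s(P)$, which is not automatic for $s>2$ because of $r^{\pi/\omega}$ edge singularities.
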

	Here we require that $u_0\in H^s(P)\,(s>d/2)$ so that, by the Sobolev embedding theorem, $u_0$ and hence $u$ are continuous in the spatial variables. The reason why we require the solution $u$ to be continuous in the spatial variables is explained in Remark~\ref{remark: why s>d/2}. This then raises the question of whether the same conclusion holds when $u_0$ merely belongs to $L^2(P)$.
	
	The proof of our main result also requires the dynamical properties of the billiard flow in convex polyhedra, which are known results and are presented in Section~\ref{sec: Billiard Dynamics on Polyhedra}. The content of Section~\ref{sec: Billiard Dynamics on Polyhedra} shows that any billiard trajectory inside a convex polyhedron that does not meet the singular set $\mathcal{S}$ is contained in some ``immersed periodic tube'' with a suitable cross-section, and there are only finitely many such periodic tubes. The billiard trajectories contained in these periodic tubes are not necessarily individually periodic, but the Poincar\'e map on the cross-section of a periodic tube, after a certain iteration, is a rotation. Consequently, when we consider functions within a periodic tube, they may not be periodic along the direction of the tube but are almost periodic; thus, in Section~\ref{sec: Almost Periodic Functions and Almost Periodic Boundary Conditions} we provide relevant material on almost periodic functions and almost periodic boundary conditions. In Section~\ref{sec: An Observability Resolvent Estimate} we give an observability resolvent estimate for the stationary Schr\"odinger equation with a source term in polyhedra, which is crucial for the proof of Theorem~\ref{thm1.1}. The idea of the proof of this resolvent estimate comes from \cite{CGM}; it uses semiclassical measures and is divided into the rational polyhedra case and the general case, based on the dynamical properties of the billiard flow. In Section~\ref{sec: Proof of Theorem1.1} we present the proof of Theorem~\ref{thm1.1}, which is an appropriate adaptation of the method of Burq--Zworski in \cite{BZ04}. Finally, in Section~\ref{sec: From Observability to Control} we give the proof of Theorem~\ref{thm1.2} by a standard HUM argument.
	
	\section{Preliminaries}
	\subsection{Billiard Dynamics in Polyhedra}
	\label{sec: Billiard Dynamics on Polyhedra}
	In this subsection we present some known facts about the dynamical properties of the billiard flow in polyhedra in $\R^d\,(d\geq2),$ which will be used later in the proofs. All these results and their proofs can be found in \cite{CGM}; for the sake of completeness and readability, we quote them here without proof.
	
	Let $P\subset\R^d$ be a convex polyhedron. Denote by $\mathcal{S}$ the \emph{singular set} of the boundary $\partial P;$ that is, $\mathcal{S}\subset\partial P$ is the union of faces of $\partial P$ with dimension $\leq d-2$, or in other words, the $(d-2)$-skeleton of $\partial P.$ For example, when $d=2$, the singular set of the boundary of a two-dimensional polyhedron (i.e., a polygon) is the set of vertices of the polygon; when $d=3$, the singular set of the boundary of a three-dimensional polyhedron is the union of the edges of the polyhedron.
	
	The billiard flow in $P$ describes the motion of a point particle that moves in straight lines with unit speed inside $P.$ When the billiard hits one of the faces of the boundary, it obeys the law of optical reflection (i.e., the angle of incidence equals the angle of reflection). Trajectories which hit the singular set $\mathcal{S}$ stop right there; such trajectories are also called singular. Such trajectories form a set of measure zero. If a trajectory never hits the singular set, then it can be extended for all time. Now we introduce some notations and definitions about this system, which is essentially the same as in \cite{CGM}.
	
	Let $\Gamma:=\partial P$, and denote by $T\Gamma$ the set of all unit tangent vectors with base points in $\Gamma$ and pointing inside $P$. Let $T\Gamma_1:=\{x\in T\Gamma : \text{the forward orbit of } x \text{ never hits } \mathcal{S}\}.$ Denote by $f$ the first return (Poincar\'e) map of the billiard flow to the set $T\Gamma.$ Then $f$ and its iterates are defined smoothly on $T\Gamma_1$. Suppose the polyhedron $P$ has $l$ faces denoted by $\mathscr{F}_1,\mathscr{F}_2,\dots,\mathscr{F}_l.$ Let $\Sigma^{+}_{l}:=\{1,2,\dots,l\}^{\N}$ represent the set of all \emph{forward strings} for the symbolic dynamics of the billiard flow.
	
	For a billiard trajectory determined by $x\in T\Gamma_1,$ the \emph{symbolic string} for the forward orbit is given by $w(x),$ defined by $w(x)_i=j$ iff the basepoint of $f^i(x)$ lies in $\mathscr{F}_j$ (here the subscript $i$ denotes the $i$-th component of $w(x)$, and the superscript $i$ denotes the $i$-th iteration of $f$). The set of all these possible symbolic strings is denoted by $\Sigma^{+}_{P}:=\{w\in\Sigma^{+}_{l}:\exists x\in T\Gamma_1\text{ such that } w=w(x)\}.$ For a given symbolic string $w\in\Sigma^{+}_{P},$ let $X(w):=\{x\in T\Gamma_1:w(x)=w\}$ denote all tangent vectors whose billiard trajectories have the same symbolic representation $w.$ An arbitrary element of $X(w)$ is denoted by $x(w).$
	
	When $d=2,$ polyhedra (i.e., polygons) can be divided into rational and irrational ones, depending on whether the vertex angles are rational. A two-dimensional polyhedron is called \emph{rational} if all its vertex angles are rational; otherwise, it is called \emph{irrational}. When $d\geq3,$ there is no standard definition of rational or irrational polyhedra. We present the definition in \cite{CGM}, which is formulated from the viewpoint of dynamical systems. There is an important difference between billiard trajectories in rational polyhedra and those in irrational polyhedra, which we will present later.
	\begin{definition}
		Let $P\subset\R^d$ be a polyhedron and let $\rho_i$ represent the linear reflection determined by the $i^{\text{th}}$-face of $P.$ Then $P$ is called \emph{rational} if the group $G$ generated by $\rho_i$ is finite, otherwise the polyhedron is called \emph{irrational}.
	\end{definition}
	
	There is an unfolding process for billiard trajectories in a polyhedron. Let $\gamma$ be a billiard trajectory in $P$. When $\gamma$ hits a boundary face $\mathscr{F}_i$ of $P$, we reflect $P$ across $\mathscr{F}_i$ and denote the reflected polyhedron by $\sigma P$ where $\sigma$ denotes the reflection. We identify the corresponding points on the common face of $P$ and $\sigma P$ under the reflection, so that the trajectory $\gamma$ continues into $\sigma P$ along the original direction until it hits a next face, and the process is repeated. In this way, the trajectory $\gamma$ can be viewed as a straight line without reflection in the doubled polyhedron. We call this process the \emph{unfolding} of the trajectory.
	
	Let $D:=(P\cup\sigma P)/\sim$ denote the double of $P$, where $\sim$ denotes the gluing by a pointwise identification of the common face $\partial P\cap\partial(\sigma P).$ Let $D_0:=D\setminus\mathcal{S}.$ Given a set $U\subset\R^{d-1}$ and a local isometry $F:U\times\R\rightarrow D_0,$ we call $F(U\times\R)$ an \emph{immersed tube} or just a \emph{tube}. We will often identify $F$ with its image $\mathscr{C}:=F(U\times\R).$ We call $U$ the \emph{cross-section} of the tube $\mathscr{C}.$ A \emph{lifted tube} is the image of $\mathscr{C}$ in the unit sphere bundle $SD_0,$ determined by the unit vector in the positive direction of the tube. An immersed tube can be specified by a subset $Q$ of $T\Gamma_1,$ consisting of parallel vectors whose base points form a convex set on one of the faces of the polyhedron. For a trajectory $\gamma$ generated by some $x\in T\Gamma_1$ with symbolic representation $w(x)\in\Sigma^{+}_{P},$ if $\gamma$ does not hit the singular set $\mathcal{S},$ then it can be ``thickened'' to form a tubular neighborhood (i.e., an immersed tube) $\mathscr{C}$ around $\gamma$ such that each trajectory in $\mathscr{C}$ parallel to $\gamma$ has the same symbolic representation $w(x).$ This time the intersection of the lifted tube of $\mathscr{C}$ with $T\Gamma_1$ is $Q=X(w),$ and we call such a tube $\mathscr{C}$ \emph{maximal} since it cannot be enlarged. Such maximal tubes may have some kinds of periodicity depending on the polyhedron, which we will present later. Now we give the definition of periodic tubes.
	\begin{definition}
		\label{def: periodic tube}
		Let $U\subset\R^{d-1}$ and let $F:U\times\R\rightarrow D_0$ be a local isometry. Then an immersed tube $F(U\times\R)$ is called \emph{periodic} if there is a positive number $L$ and a rotation $\mathscr{R}$ in $\R^d$ fixing the $\R$ direction of $U\times\R$, such that $F(x,y+L)=F(\mathscr{R}x,y)$ for all $(x,y)\in U\times\R.$
	\end{definition}
	
	In the above definition, the $L$ is referred to as the length of the periodic tube, which is also a period of the \emph{central geodesic} given by $F(\{x_0\}\times\R),$ where $x_0\in U$ is the center of mass of $U$; the $\mathscr{R}$ is called the \emph{rotation associated with the periodic tube}. It should be pointed out that the periodic tube in the above definition does not mean that all the parallel billiard trajectories contained in the tube are \emph{individually} periodic.
	
	Periodic tube can also be ``thickened'' to be maximal. For a maximal periodic tube in $D_0$ we have the next properties:
	\begin{proposition}{\cite[Proposition~3.5]{CGM}}
		\label{thm: boundary arbitrarily close to singular set}
		The cross-section of a maximal periodic tube $\mathscr{C}$ is convex. Every trajectory on the boundary of $\mathscr{C}$ comes arbitrarily close to the singular set.
	\end{proposition}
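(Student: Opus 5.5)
To prove Proposition~\ref{thm: boundary arbitrarily close to singular set}, I will describe a maximal periodic tube $\mathscr{C}=F(U\times\R)$ through its cross-section. I regard $U$ as the set of all points $x\in\R^{d-1}$ in the plane orthogonal to the central geodesic such that the line $F(\{x\}\times\R)$, continued by unfolding in the direction of the tube, never meets $\mathcal{S}$ and has the same symbolic string as the central trajectory. Both claims then come from describing this set explicitly through the unfolding.

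\emph{Convexity.} I unfold the tube along the common symbolic string $w$. Over one period of length $L$ the unfolding produces a finite chain of reflected copies $\sigma_1P,\dots,\sigma_NP$ of $P$. The straight line through $x$ has string $w$ exactly when, for each $k$, it crosses the copy $\sigma_kP$ from the face labelled $w_{k-1}$ to the face labelled $w_k$ through the relative interiors of these faces. Projected orthogonally onto the cross-section plane, each such condition says that $x$ lies in the projection of the relative interior of a convex face $\sigma_k\mathscr{F}_{w_k}$, which is a relatively open convex set. The periodicity $F(x,y+L)=F(\mathscr{R}x,y)$ means that later periods impose the same constraints, rotated by powers of $\mathscr{R}$. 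Hence $U$ is the intersection of the sets $\mathscr{R}^{-m}(\pi_k(\mathrm{relint}\,\sigma_k\mathscr{F}_{w_k}))$ over $k\le N$ and $m\ge0$. Every set in this family is convex and $\mathscr{R}^{\pm1}$ preserves convexity, so $U$ is convex. By maximality, $U$ is the whole of this intersection (more exactly, its interior), so it is convex.

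\emph{Boundary trajectories.} Let $x_*\in\partial U$. The trajectory $\gamma$ through $x_*$ is a limit of trajectories in the tube. Suppose, for contradiction, that $\gamma$ stays at distance at least $\delta>0$ from $\mathcal{S}$ for all time. Since $\gamma$ avoids $\mathcal{S}$, it has a well-defined string $w'$, and along each crossing it meets faces at points that are at least $\delta$ from $\mathcal{S}$. Those points are therefore relative-interior points with a uniform margin: every face crossing happens at distance at least $c\delta$ from the relative boundary of the face, with $c$ depending only on $P$, since the angles of $P$ are bounded below. Then every parallel trajectory starting at distance less than $c\delta$ from $x_*$ has the same string, for all time. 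The margin does not shrink, because nearby parallel lines move together rigidly under reflections. So a whole ball $B(x_*,c\delta)$ lies in a tube with string $w'$. That ball meets $U$, and trajectories in $U$ have string $w$, so $w'=w$. By maximality $B(x_*,c\delta)\subset U$, which contradicts $x_*\in\partial U$. Therefore $\inf_t \mathrm{dist}(\gamma(t),\mathcal{S})=0$. The same argument applies to every forward and backward tail, and in the periodic setting each period is an isometric copy up to the rotation $\mathscr{R}$, so $\gamma$ comes arbitrarily close to $\mathcal{S}$. If instead $\gamma$ actually hits $\mathcal{S}$, the conclusion holds trivially.

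\emph{Main obstacle.} The hardest step is the uniformity in the boundary argument: showing that distance at least $\delta$ from $\mathcal{S}$ gives a uniform neighbourhood of parallel lines with the same combinatorics for all time. This needs care at grazing incidences, where a line runs nearly parallel to a face. I would handle it by noting that along a straight line in the unfolding, the distance to the $(d-2)$-skeleton of each copy controls the distance to every face boundary. Parallel translation by a vector $v$ moves the hitting point on a face by at most $|v|/\sin\theta$, where $\theta$ is the angle of incidence. So the grazing case needs a separate check: when $\theta$ is small and the line stays far from edges, the translated line still hits the same face or the neighbouring one, and its string does not change.
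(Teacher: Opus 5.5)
Your convexity argument is fine: the cross-section is (the interior of) an intersection of projections of relatively open unfolded faces, and this family is permuted by $\mathscr{R}$. The overall contradiction structure of the second part is also right.

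\textbf{The gap.} It lies in the thickening step, which you flag yourself. Knowing that $\gamma$ crosses each face at distance $\geq\delta$ from the relative boundary of that face does not imply that a translate $\gamma+v$ with $|v|<\delta$ crosses the same face. As you note, the crossing point moves by about $|v|/\sin\theta$, which is unbounded at grazing incidence. So information at the crossing points alone cannot give the claim ``every parallel trajectory within $c\delta$ has the same string.'' The sentence ``the margin does not shrink because nearby parallel lines move together rigidly'' is circular: the rigidity only holds once you know the translates undergo the same reflections. Your patch fails too, because a translate that ``hits the neighbouring face'' has, by definition, a different symbolic string. (Separately, no angle-dependent constant $c$ is needed, since the relative boundary of a facet lies in $\mathcal{S}$.)

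\textbf{The fix.} Use the hypothesis $\operatorname{dist}(\gamma(t),\mathcal{S})\geq\delta$ along the whole trajectory, not only at crossing points, via a sweeping argument.
\begin{enumerate}[nosep]
\item Fix $v\perp\dot\gamma$ with $|v|<\delta$, and fix a finite window of crossings.
\item The set of $\sigma\in[0,1]$ for which $\gamma+\sigma v$ crosses the same unfolded faces in their relative interiors is open and contains $0$.
\item Let $\sigma_0$ be the supremum of its component containing $0$. If $\sigma_0$ were not in the set, the line $\gamma+\sigma_0 v$ would meet the relative boundary of one of these faces at a point $q=\gamma(t)+\sigma_0 v\in\mathcal{S}$.
\item For $\sigma<\sigma_0$ the unfolding folds consistently. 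Hence the folded segment $\sigma\mapsto\gamma(t)+\sigma v$, $\sigma\in[0,\sigma_0]$, is a billiard path of length $\sigma_0|v|<\delta$ from $\gamma(t)$ to $\mathcal{S}$, a contradiction.
\end{enumerate}
Equivalently: in the flat manifold $D_0$, a geodesic issued from $\gamma(t)$ can only stop by hitting $\mathcal{S}$. So the normal exponential map of $\gamma$ is defined and locally isometric on the $\delta$-ball, which gives a tube of radius $\delta$ around $\gamma$ directly. This needs no angle estimates and no separate grazing case, and with it the rest of your argument goes through.

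\textbf{A minor point.} Your remark about forward and backward tails is unnecessary, since the statement only asks for $\inf_t\operatorname{dist}(\gamma(t),\mathcal{S})=0$. As written it is also unjustified: a tail version would need the recurrence $\mathscr{R}^{m_j}\to\mathrm{id}$, because a thickened forward tail does not by itself force the full string to equal $w$.
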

	
	The rationality or irrationality of $P$ affects the shape of the cross-section of a maximal periodic tube in $D_0$ and the periodicity of the periodic tube. For example, when $d=3$, for a rational polyhedron, the cross-section of a maximal periodic tube in $D_0$ is a convex polygon, the associated rotation $\mathscr{R}$ is a rotation by a rational angle, and the parallel billiard trajectories contained in the periodic tube are all individually periodic; whereas for an irrational polyhedron, the cross-section of a maximal periodic tube may be a disk; in this case the associated rotation $\mathscr{R}$ is a rotation by an irrational angle, and among the parallel billiard trajectories contained in the periodic tube only the central geodesic is periodic (see \cite{CGM}\cite{GKT}).
	
	As mentioned before, if a billiard trajectory $\gamma$ in $D_0$ generated by $x\in T\Gamma_1$ with a symbol $w$ does not hit $\mathcal{S},$ it can be ``thickened'' to form a tube $\mathscr{C}$ around $\gamma$ such that all trajectories contained in $\mathscr{C}$ have the same symbol $w.$ If $w$ is periodic, such a tube $\mathscr{C}$ has some periodicity described by the following proposition:
	\begin{proposition}{\cite[Theorem~3.7]{CGM}}
		\label{thm: w periodic}
		Let $P\subset\R^d$ be an arbitrary convex polyhedron and $w\in\Sigma^{+}_{P}$ is a periodic sequence with minimal period $k.$ Then the following hold:
		\begin{enumerate}[label=(\arabic*), nosep]
			\item There exists $x(w)$ such that $x(w)$ is periodic with minimal period $k.$
			\item In addition, one of the following two cases holds:
			\begin{enumerate}[nosep]
				\item There exists $q\geq1$ such that all $y(w)\in X(w)\setminus x(w)$ are periodic with period $qk$ and the cross-section of the tube generated by $X(w)$ is an open polyhedron.
				\item The set $X(w)$ generates a periodic tube with a convex cross-section $\Omega\subset\R^{d-1}$ and an associated isometry $\mathscr{R}_0\in O(d-1)$ keeping $\Omega$ invariant.
			\end{enumerate}
			\item If $d=2$ or $3$ and $k$ is odd, then only the case (2).(a) above can happen and $q=2.$
			\item If $P$ is rational then only the case (2).(a) above can happen.
		\end{enumerate}
	\end{proposition}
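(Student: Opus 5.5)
The plan is to work in the unfolded picture. A periodic symbol $w$ with minimal period $k$ corresponds to a finite word of reflections $\rho_{w_0},\dots,\rho_{w_{k-1}}$. Following $x\in X(w)$ for $k$ bounces is the same as moving along a straight line through the chain of reflected copies $P,\sigma_1P,\dots$. The composition of the affine reflections across the faces $\mathscr{F}_{w_0},\dots,\mathscr{F}_{w_{k-1}}$ is an affine isometry
\[
A(z)=Mz+b,\qquad M=\rho_{w_{k-1}}\cdots\rho_{w_0}\in O(d).
\]
After $k$ bounces, the point $x$ returns to the face $\mathscr{F}_{w_0}$ with the velocity transformed by $M$. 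Since $w$ is periodic, the map $f^k$ sends $X(w)$ into $X(w)$, and in the unfolded coordinates $f^k$ acts on the direction by the linear part $M$ and on the base point by $A$.

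For part (1), I would use convexity and compactness. $X(w)$ is a set of parallel vectors whose base points form a convex set on $\mathscr{F}_{w_0}$. Parallelism forces $M$ to fix the common direction $v$, so $f^k$ restricts to an affine map of the convex base-point set. The closure of this set is compact and convex, so Brouwer's theorem (or simply averaging an orbit, since $A$ is an isometry) gives a fixed point. It then remains to show that this fixed point is not on the boundary of the closure, i.e.\ that it lies in $X(w)$. I would argue by taking the barycenter of the (invariant) cross-section, which lies in the interior of the convex set. Minimality of the period $k$ of the point then follows from the minimality of $k$ for $w$.

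For part (2), I would look at the restriction $R_0$ of $M$ to the orthogonal complement $v^{\perp}\cong\R^{d-1}$, acting on the cross-section $\Omega$ around the fixed central point. If $R_0$ has finite order $q$, every $y\in X(w)$ returns after $qk$ bounces, so each is periodic with period $qk$. Moreover the maximal tube is then cut out by finitely many faces of copies of $P$, so its cross-section is an open polyhedron; this is case (a). Otherwise $\Omega$ is an invariant convex set of an infinite-order isometry and one gets case (b) directly.

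Part (4) is immediate: $M$ lies in the finite group $G$, so it has finite order.

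Part (3) is the main obstacle. I would handle it with determinants. If $k$ is odd, then $\det M=-1$. Since $M$ fixes $v$, its restriction $R_0$ has determinant $-1$ on $\R^{d-1}$ with $d-1\in\{1,2\}$. For $d=2$ this makes $R_0=-1$, and for $d=3$ it makes $R_0$ a reflection. In either case $R_0^2=\mathrm{id}$, so $q=2$ and case (a) holds. The delicate point is ensuring that $M$ genuinely fixes $v$ and that an open set of parallel trajectories shares the symbol; I would get both from the thickening property of non-singular trajectories stated before Definition~\ref{def: periodic tube}.
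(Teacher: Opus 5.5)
The paper itself gives no proof of this statement: it is quoted from \cite{CGM}, where it extends \cite[Theorem~5]{GKT}. Your outline is essentially the standard argument used there: unfolding along the periodic word, the composed reflection $M$ fixing the common direction $v$, a fixed point of the induced isometry of the convex cross-section, the split according to whether $R_0=M|_{v^\perp}$ has finite order, $\det M=(-1)^k$ for (3), and $M\in G$ for (4). The one step you should tighten is showing that the fixed point actually lies in $X(w)$. The thickening remark only guarantees a full tube around trajectories that stay a positive distance from $\mathcal S$, and elements of $X(w)$ can accumulate on $\mathcal S$ (e.g.\ boundary orbits of a disk-shaped cross-section in case (2).(b)). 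So instead of the barycenter of the whole cross-section, take the barycenter of the closed convex hull $K$ of the orbit of one $p\in X(w)$ under the return isometry; it is fixed and lies in the relative interior of $K$, hence in the open convex set $W$ of lines crossing the relative interiors of the first $k$ faces of the corridor (since $K\subset\overline W$ and $p\in K\cap W$), and because the crossing conditions repeat with period $k$, this point and a small ball around it lie in $X(w)$.
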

	\begin{remark}
		Proposition~\ref{thm: w periodic} is a higher-dimensional generalization of \cite[Theorem~5]{GKT}. The isometry $\mathscr{R}_0$ in part (2).(b) of the proposition belongs to $O(d-1)$ and is not always a rotation. However, at the cost of doubling the length of the periodic tube, we may always assume that $\mathscr{R}_0$ is a rotation.
	\end{remark}
	
	It can also be seen from the above proposition that even if the symbol $w$ is periodic, the parallel trajectories contained in the periodic tube generated by $X(w)$ are not necessarily individually periodic.
	
	If $w$ is non-periodic, we have the following result:
	\begin{proposition}{\cite[Theorem~3.9]{CGM}}
		\label{thm: w non-periodic}
		Let $P\subset\R^d$ be a convex polyhedron and $w\in\Sigma^{+}_{P}$ non-periodic. Let $x\in X(w).$ Then the closure of the trajectory generated by $x$ intersects the singular set $\mathcal{S}.$
	\end{proposition}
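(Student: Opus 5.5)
The statement is Proposition~\ref{thm: w non-periodic}: for non-periodic $w$, the closure of the trajectory through $x\in X(w)$ meets $\mathcal{S}$. I would argue by contradiction. Suppose the closure $\overline{\gamma}$ of the forward trajectory generated by $x$ stays at positive distance $\delta>0$ from $\mathcal{S}$.

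The first step is to thicken $\gamma$ into a tube. Any parallel trajectory starting within distance $\delta' <\delta$ of $x$, transversally in the unfolded picture, hits the faces in exactly the same order as $\gamma$. In the unfolding, the trajectory is a straight line. A parallel line at distance $<\delta$ can only be separated from it combinatorially by crossing a $(d-2)$-face, and that face would then be within distance $\delta$ of $\gamma$. So a round cylinder $B_{\delta'}\times\R$ around $\gamma$ embeds locally isometrically in $D_0$, and every trajectory in it carries the symbol $w$. Hence $X(w)$ contains a $(d-1)$-dimensional ball of parallel vectors, and $f$ acts on it by the isometric Poincaré maps $f^n$.

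The second step uses compactness and recurrence. The points $f^n(x)$ lie in the compact set $T\Gamma$; more precisely, since the directions lie in the finite-or-countable orbit of the reflection group, we restrict to the finitely many faces. So some subsequence $f^{n_j}(x)$ converges, with positions and directions close. I would aim to show that once $f^{n}(x)$ and $f^{m}(x)$ are close enough, with $|f^n(x)-f^m(x)|<\delta'/2$ and the direction differences small, the tubes around them overlap. The cylinders around the two segments of $\gamma$ then share a common transverse ball of parallel vectors. Because all symbols inside a tube are constant and equal to the shifted symbols, this gives $\sigma^n w=\sigma^m w$ for the shift $\sigma$, so $w$ is eventually periodic.

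The direction issue needs care. The directions of $f^n(x)$ lie in $G\cdot v$, where $G$ is generated by the reflections $\rho_i$, and for irrational $P$ this set may not be discrete. So closeness of directions does not make them equal. I would handle this by noting that the linear part of the holonomy along $\gamma$ from time $n$ to time $m$ maps the tube's cross-section isometrically. If the two directions are close but distinct, the straight-line images in the unfolding diverge linearly, yet both must stay $\delta$-far from $\mathcal{S}$. The constraint I would try to exploit is that the overlap region must still lie in $X(\sigma^n w)$, and this forces the directions to be equal.

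Finally, I would upgrade eventual periodicity to periodicity. $X(w)$ and $X(\sigma^n w)$ are related by the invertible $f^n$, and the same recurrence argument applied backward in time shows that $w$ itself is periodic, contradicting the hypothesis. The main obstacle is the direction-matching step in irrational polyhedra. My first attempt there would be a volume argument in the unfolded tube: infinitely many disjoint-in-time tube segments of fixed cross-section $\delta'$ inside compact $P$ must overlap in phase space with identical symbol, and the symbol determines the linear holonomy. That should force exact parallelism.
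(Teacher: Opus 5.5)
The paper gives no proof of this statement; it quotes it from \cite{CGM}, where it extends \cite{GKT}. Judged on its own, your proposal has a genuine gap at the central step.

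Your routine steps are fine. The thickening of $\gamma$ into a singular-free tube whose parallel trajectories all carry $w$ works. So does the passage from $\sigma^n w=\sigma^m w$ to periodicity. That last step should be done by transporting $f^m(x)$ back $n$ steps inside the forward tube of $x$ and using injectivity of $f$, not by recurrence ``backward in time'', since nothing is assumed about the backward orbit of $x$. When $G$ is finite this already gives a proof: pigeonhole on finitely many face--direction pairs yields $n<m$ with $f^n(x)$, $f^m(x)$ parallel and transversally closer than $\delta'$.

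In the irrational case, however, the whole content of the statement is the direction-matching step you leave open, and neither of your suggestions closes it.
\begin{itemize}
\item A single pair of nearly parallel singular-free tubes at angle $\theta\neq0$ is perfectly consistent; two periodic bands of nearby rational slopes in a square already show this. Such tubes share their code over an axial length of order $\delta/\theta$, and are then split by a point of $\mathcal{S}$ lying between the axes, outside both tubes.
\item The vectors of the tube at time $m$ are parallel to $f^m(x)$, so they lie in $X(\sigma^m w)$. Saying they lie in $X(\sigma^n w)$ presupposes the parallelism you want to prove.
\item The volume argument fails for a dimensional reason. Pigeonhole forces tube segments to overlap in $P$, i.e.\ in position space. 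But the lifted tube is a $d$-dimensional subset of the $(2d-1)$-dimensional phase space, of measure zero, so nothing forces overlap in phase space. Segments meeting in $P$ carry different directions and a priori different symbols; ``identical symbol'' is the conclusion, not an input.
\item That $w_1\cdots w_n$ determines the holonomy $g_n$ says nothing about whether $g_n^{-1}e=g_m^{-1}e$.
\end{itemize}

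What is missing is a mechanism that forbids recurrence with nonzero tilt. Here is one way to supply it, not necessarily the route taken in \cite{CGM}.
\begin{enumerate}
\item Track the full configuration $\tilde h_n$: the copy $P_n$ placed relative to $\gamma(t_n)$, including the rotation about the axis. These lie in a compact subset of $\mathrm{Isom}(\R^d)$, and the billiard acts as a homeomorphism on their $\omega$-limit set.
\item Pick a uniformly recurrent configuration $y$ there, i.e.\ a point of a minimal subset.
\item The maximal parallel tube around $y$ has bounded convex cross-section. By maximality, each boundary line comes arbitrarily close to $\mathcal{S}$. By uniform recurrence and compactness of the boundary, it does so at axial positions with uniformly bounded gaps, a (P2')-type property.
\item Suppose $\tilde h_{n_k}\to y$ with tilt $\theta_k\neq0$. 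The tube of $f^{n_k}(x)$ must leave this bounded cross-section. While crossing its boundary, it contains a $\delta'/2$-neighbourhood of a boundary line over an axial length of order $\delta'/\theta_k$.
\item Its development agrees with that of $y$ on the convex intersection of the two tubes, since both start in the same copy. So for large $k$ it would contain a point of $\mathcal{S}$, a contradiction.
\end{enumerate}
Hence $\theta_k=0$ eventually, and your exact-overlap argument then gives $\sigma^{n_k}w=\sigma^{n_{k'}}w$. Without an ingredient of this kind, your proposal proves the statement only for rational polyhedra.
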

	
	Combining Proposition~\ref{thm: w periodic} and Proposition~\ref{thm: w non-periodic} we can get the following dichotomy:
	\begin{corollary}{\cite[Corollary~3.10]{CGM}}
		\label{thm: dichotomy}
		For any billiard trajectory $\gamma$ in a convex polyhedron $P,$ either $\gamma$ is contained in an immersed periodic tube, or the closure of $\gamma$ intersects $\mathcal{S}.$
	\end{corollary}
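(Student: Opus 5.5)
The statement to prove is Corollary~\ref{thm: dichotomy}. I would derive it directly from Proposition~\ref{thm: w periodic} and Proposition~\ref{thm: w non-periodic}, splitting into cases according to how $\gamma$ meets $\mathcal{S}$ and whether its symbol is periodic.

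\textbf{Case 1: $\gamma$ hits $\mathcal{S}$ in finite time.} Then $\overline{\gamma}\cap\mathcal{S}\neq\emptyset$ trivially, and there is nothing to prove. We may therefore assume $\gamma$ never hits $\mathcal{S}$ and is defined for all time.

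\textbf{Case 2: $\gamma$ never hits $\mathcal{S}$.} If $\gamma$ never meets $\partial P$ at all, then $P$ bounded forces a contradiction; indeed a straight line leaves $P$ in finite time. So $\gamma$ meets some face, and after shifting time we may take its starting vector $x\in T\Gamma$. Since the forward orbit never hits $\mathcal{S}$, we have $x\in T\Gamma_1$, so the symbol $w=w(x)\in\Sigma^{+}_P$ is defined.
\begin{enumerate}[label=(\roman*), nosep]
\item If $w$ is non-periodic, Proposition~\ref{thm: w non-periodic} gives directly that the closure of the trajectory generated by $x$ meets $\mathcal{S}$.
\item If $w$ is periodic with minimal period $k$, then $x\in X(w)$, and we apply Proposition~\ref{thm: w periodic}(2).
\end{enumerate}

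In sub-case (2)(b), $X(w)$ generates a periodic tube containing $\gamma$, and we are done. In sub-case (2)(a), every element of $X(w)$ is periodic, with period $k$ or $qk$. The tube generated by $X(w)$ then satisfies Definition~\ref{def: periodic tube} with length $L$ equal to the total flight length over $qk$ bounces and $\mathscr{R}=\mathrm{id}$. To justify this, note that after $qk$ returns all parallel trajectories come back to their starting points with the same direction. The unfolding map along the common symbolic word is a single isometry, and it fixes each point of the cross-section, so it is the identity on the tube. Hence $\gamma$ lies in a periodic tube.

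\textbf{Main obstacle.} The delicate point is the only step not given verbatim by the cited propositions: checking that case (2)(a) really produces a periodic tube in the sense of Definition~\ref{def: periodic tube}. This requires the flight length of $qk$ bounces to be the same constant $L$ for all parallel trajectories sharing the symbol $w$. I would argue it via unfolding. Along the fixed word, the composition of the reflections is a Euclidean motion $A$. Parallel lines in direction $v$ return to themselves after $qk$ steps, so $A$ maps each such line to itself, fixes $v$, and acts as the identity on the transverse cross-section. Consequently $A$ is a translation by $Lv$ with $L$ independent of the chosen line.

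A secondary subtlety concerns the backward-time part of $\gamma$ in Case 2. Periodicity of the tube settles it, because the periodic tube determines $\gamma$ for all times.
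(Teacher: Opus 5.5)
Your proposal is correct and follows the same route as the paper, which obtains the dichotomy by combining Proposition~\ref{thm: w periodic} for periodic symbols with Proposition~\ref{thm: w non-periodic} for non-periodic ones. You also check that case (2)(a) gives a periodic tube with $\mathscr{R}=\mathrm{id}$ and a common length $L$, a detail the paper leaves implicit; your unfolding argument for this is sound, though the unfolding isometry itself is the translation by $Lv$ rather than the identity (it is the induced return map on the cross-section that is trivial, as your later paragraph correctly states).
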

	When $d=2,$ in \cite{HHM09} the authors proved that any sufficiently small neighborhood $U_{\varepsilon}$ of the singular set $\mathcal{S}$ (i.e., the set of vertices) of a two-dimensional polyhedron (i.e., a polygon) satisfies the following two properties (which they called the ``cylinder condition''):
	\begin{enumerate}[label=(P\arabic*), nosep]
		\item For any geodesic $\gamma,$ either $\gamma$ is periodic, or the closure of $\gamma$ intersects $\mathcal{S}.$
		\item Any geodesic that avoids $U_{\varepsilon}$ belongs to a maximal cylinder and the number of such maximal cylinders is finite.
	\end{enumerate}
	In \cite{CGM}, the authors generalized the above two properties to higher dimensions. The generalization of (P1) to higher-dimensional polyhedra can be obtained from Corollary~\ref{thm: dichotomy} above, namely any billiard trajectory in a polyhedron that does not intersect a small neighborhood of the singular set $\mathcal{S}$ is necessarily contained in some periodic tube. There is also a generalization of (P2) to higher-dimensional polyhedra, which is described through the following definition:
	\begin{definition}
		\label{def: finite tube condition}
		Let $D$ be the double of $P$ as defined earlier. A region $U\subset D$ is said to satisfy the \emph{finite tube condition} if there exists a finite collection of periodic tubes $(\mathscr{C}_i)_{1\leq i\leq n}$ such that any trajectory that avoids $U$ belongs to some $\mathscr{C}_i.$
	\end{definition}
	By Definition~\ref{def: finite tube condition} we have the following proposition which is the generalization of Property~(P2) to higher-dimensional polyhedra:
	\begin{proposition}{\cite[Theorem~3.12]{CGM}}
		\label{thm: finite tube condition}
		Let $P\subset\R^d$ be a convex polyhedron and let $D$ be its double. Then for any $\varepsilon>0,$ the $\varepsilon$-neighborhood $U_{\varepsilon}$ of the singular set $\mathcal{S}$ of $D$ satisfies the finite tube condition.
	\end{proposition}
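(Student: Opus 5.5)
Since Proposition~\ref{thm: finite tube condition} is quoted from \cite[Theorem~3.12]{CGM}, I would reconstruct its proof from the dynamical facts already stated: Corollary~\ref{thm: dichotomy}, Proposition~\ref{thm: boundary arbitrarily close to singular set} and Proposition~\ref{thm: w periodic}, combined with a compactness argument. Fix $\varepsilon>0$ and let $\gamma$ be a trajectory in $D$ that avoids $U_\varepsilon$. Then $\overline{\gamma}$ stays at distance at least $\varepsilon$ from $\mathcal{S}$, so by Corollary~\ref{thm: dichotomy} $\gamma$ lies in an immersed periodic tube. Moreover, by Proposition~\ref{thm: w non-periodic} its symbol $w$ is periodic. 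Thicken this tube to the maximal periodic tube $\mathscr{C}(w)$ generated by $X(w)$. It then suffices to show that only finitely many distinct maximal tubes $\mathscr{C}(w)$ contain trajectories avoiding $U_\varepsilon$.

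The key quantitative step is a lower bound on the width of such tubes. By Proposition~\ref{thm: boundary arbitrarily close to singular set}, every trajectory on $\partial\mathscr{C}(w)$ comes arbitrarily close to $\mathcal{S}$. Since $\gamma$ stays at distance at least $\varepsilon$ from $\mathcal{S}$, and trajectories parallel to $\gamma$ at transverse distance $\delta$ stay within $\delta$ of it (the unfolded flow is a translation in a flat tube), the cross-section $\Omega$ of $\mathscr{C}(w)$ contains a ball of radius roughly $\varepsilon$ around the point corresponding to $\gamma$. Thus every relevant maximal tube has cross-sectional inradius at least $\varepsilon$.

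Next I would bound the period length $L$ of such tubes. The tube of length $L$ with a cross-section containing an $\varepsilon$-ball has volume at least $c_d\varepsilon^{d-1}L$ in $D$, counted with multiplicity. I would want the immersion restricted to one period to be injective up to bounded multiplicity, which is where the rotation $\mathscr{R}$ and the minimal period $k$ of $w$ enter. Given that, $L\le C\,\mathrm{vol}(D)\varepsilon^{1-d}$. Since every edge of the symbolic path has length bounded below away from $\mathcal{S}$, the combinatorial period $k$ is bounded as well. There are only finitely many periodic words of bounded period, so there are finitely many tubes $\mathscr{C}_i$.

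The main obstacle is the multiplicity and volume argument, since distinct tubes or long tubes could overlap themselves. If it fails, I would switch to compactness in phase space. Suppose infinitely many distinct maximal tubes exist, and pick in each a trajectory avoiding $U_\varepsilon$, with initial vectors $x_n\in T\Gamma$. Pass to a limit $x_n\to x$. The limit trajectory avoids $U_{\varepsilon'}$ for $\varepsilon'<\varepsilon$, so it lies in some maximal tube with an inradius-$\varepsilon'$ cross-section. Symbols depend locally constantly on the initial vector for finite times away from $\mathcal{S}$, and a full $\varepsilon'$-neighborhood of the limit trajectory stays in that tube. Therefore $x_n$ eventually lies in the same tube, which is a contradiction.
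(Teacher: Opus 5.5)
The paper gives no proof of this statement (it is quoted from \cite{CGM}), so I assess your argument on its own, and there is a genuine gap. Your reductions are sound: a trajectory avoiding $U_\varepsilon$ has periodic code by Proposition~\ref{thm: w non-periodic}. It lies in the maximal periodic tube generated by $X(w)$. By Proposition~\ref{thm: boundary arbitrarily close to singular set}, using the $1$-Lipschitz extension of $F$ to $\overline{\Omega}\times\R$ and convexity of $\Omega$, its transverse position is at distance $\ge\varepsilon$ from $\partial\Omega$.

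The problem is finiteness, and in the volume route the bounded-multiplicity step you flag is not a technicality. The multiplicity of $F$ over one period at a point $z$ is governed by the number of distinct directions in which the tube crosses $z$. These directions are among $\rho_{w_j}\cdots\rho_{w_1}v$, $1\le j\le k$. For rational $P$ they lie in the finite orbit $Gv$, so a bound $L\lesssim_P\varepsilon^{1-d}$ can be obtained. For irrational $P$ the only a priori bound is $O(k)$. Combined with $k\lesssim L/\varepsilon$, your inequality $c_d\varepsilon^{d-1}L\lesssim k\,\mathrm{vol}(D)$ then gives no bound on $L$ at all.

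The compactness fallback fails at its last sentence. Convergence $x_n\to x$ in phase space gives directions $v_n\to v$, but the lifted tube of $x$ contains only vectors of direction $v$. If $v_n\neq v$, then $x_n$ is not in that tube, no matter how large a spatial neighbourhood of the trajectory of $x$ the tube contains. Local constancy of codes only makes $w(x_n)$ agree with $w(x)$ on prefixes of length $N_n\to\infty$, not on the whole code. In tube coordinates the orbit of $x_n$ is a line $t\mapsto(x_n^*+t\theta_n,\,y_n^*+t)$ with $0\neq\theta_n\to0$. This line slowly drifts out of the bounded cross-section $\Omega$ and could then belong to a different periodic tube.

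The missing idea is to show that this slow drift forces the orbit of $x_n$ into $U_\varepsilon$. That needs a time-uniform version of Proposition~\ref{thm: boundary arbitrarily close to singular set}, namely Property~(P2') for the limit tube. Property~(P2') follows from Proposition~\ref{thm: boundary arbitrarily close to singular set} by compactness of $\partial\Omega$, the $1$-Lipschitz extension of $F$, and $F(x,y+L)=F(\mathscr{R}x,y)$ with $\mathscr{R}\Omega=\Omega$, so it does not presuppose the finite tube condition. Take $\delta,l,\eta$ from (P2') and a line with $0<|\theta|\,l<\min(\delta,\eta)$ starting at distance $\ge\varepsilon-o(1)$ from $\partial\Omega$:
\begin{itemize}
\item it must pass through $\Omega_{\varepsilon/2-\eta}\setminus\Omega_{\varepsilon/2+\eta}$ before leaving $\Omega$;
\item within $y$-distance $l$ of that point it enters a $\delta$-ball contained in $F^{-1}(U_\varepsilon)$.
\end{itemize}
Let $K_\varepsilon$ be the compact set of vectors whose orbits avoid $U_\varepsilon$. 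Every vector of $K_\varepsilon$ close to $x$ is therefore parallel to $x$ and lies in its lifted tube. So the lifted tubes form a relatively open cover of $K_\varepsilon$, and compactness yields finitely many. Without this step your argument does not close.
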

	
	We will use the weaker Property~(P2') below, which was introduced in \cite{CGM}:
	\begin{quote}
		\centering
		\emph{Property~(P2').} There exists $\delta>0,$ $l>0,$ and $\eta>0,$ such that for each $(x_0,y_0)\in(\Omega_{\varepsilon/2-\eta}\setminus\Omega_{\varepsilon/2+\eta})\times\R,$ there exists a point $(x_0,y)$ with $|y-y_0|\leq l,$ and the $\delta$-neighborhood of $(x_0,y)$ is contained in $F^{-1}(U_{\varepsilon}).$
	\end{quote}
	Property~(P2') says that in each maximal periodic tube $\mathscr{C}_i$ in $D_0,$ for any small neighborhood $U_{\varepsilon}$ of the singular set $\mathcal{S}$, there exists a conic neighborhood in the cotangent bundle whose base set is the annular region $(\Omega_{\varepsilon/2-\eta}\setminus\Omega_{\varepsilon/2+\eta})\times\mathbb{R}$ contained in some neighborhood of $\partial\mathscr{C}_i$, with directions lying near the direction of the tube, such that every point in this conic neighborhood hits $U_{\varepsilon}$ in finite time under the billiard flow. For Property~(P2') we have the following proposition:
	\begin{proposition}{\cite[Lemma~3.14]{CGM}}
		Property~(P2') holds for any $d\geq2.$
	\end{proposition}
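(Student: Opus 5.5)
The plan is a compactness argument built on Proposition~\ref{thm: boundary arbitrarily close to singular set}, applied to each of the finitely many maximal periodic tubes given by Proposition~\ref{thm: finite tube condition}. Since there are finitely many tubes $\mathscr{C}_1,\dots,\mathscr{C}_n$, it suffices to produce $\delta_i,l_i,\eta_i$ for each tube separately. We then take $\delta=\min\delta_i$, $\eta=\min\eta_i$ and $l=\max l_i$. So fix one maximal periodic tube $F:\Omega\times\R\to D_0$ with convex cross-section $\Omega$, length $L$ and rotation $\mathscr{R}$ preserving $\Omega$.

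I read $\Omega_r$ as $\{x\in\Omega:\operatorname{dist}(x,\partial\Omega)>r\}$. With this reading, points of the annulus $\Omega_{\varepsilon/2-\eta}\setminus\Omega_{\varepsilon/2+\eta}$ lie at distance between $\varepsilon/2-\eta$ and $\varepsilon/2+\eta$ from $\partial\Omega$. Since $F$ is a local isometry, it extends continuously to $\overline{\Omega}\times\R$. It is also $1$-Lipschitz for the intrinsic distance on the closure, because straight segments in $\overline\Omega\times\R$ map to paths of the same length.

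\textbf{Step 1 (uniform hitting time on the boundary).} By Proposition~\ref{thm: boundary arbitrarily close to singular set}, for every $p\in\partial\Omega$ and every $s\in[0,L]$ there is a $t$ with $\operatorname{dist}(F(p,s+t),\mathcal{S})<\varepsilon/8$. By continuity of $F$, the same $t$ works for all $(p',s')$ in an open neighborhood of $(p,s)$. Since $\partial\Omega\times[0,L]$ is compact, finitely many such neighborhoods cover it, and this gives a uniform bound $|t|\le l$.

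To pass to arbitrary $y_0\in\R$, I use periodicity. Write $y_0=kL+s$ with $s\in[0,L)$; then $F(p,y_0+t)=F(\mathscr{R}^kp,s+t)$. Here $\mathscr{R}^kp\in\partial\Omega$, because $\mathscr{R}$ preserves $\Omega$. Hence for every $(p,y_0)\in\partial\Omega\times\R$ there is a $y$ with $|y-y_0|\le l$ and $\operatorname{dist}(F(p,y),\mathcal{S})<\varepsilon/8$.

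\textbf{Step 2 (transfer to the annulus).} Take $(x_0,y_0)$ with $x_0$ in the annulus, and let $p\in\partial\Omega$ be a nearest boundary point, so $|x_0-p|\le\varepsilon/2+\eta$. Let $y$ be the time from Step~1 for $(p,y_0)$. Every point $z$ with $|z-(x_0,y)|<\delta$ satisfies
\[
|z-(p,y)|<\delta+\varepsilon/2+\eta,
\]
and since $F$ is $1$-Lipschitz,
\[
\operatorname{dist}(F(z),\mathcal{S})<\delta+\varepsilon/2+\eta+\varepsilon/8.
\]
Choosing $\delta=\eta=\varepsilon/8$ makes the right-hand side at most $7\varepsilon/8<\varepsilon$. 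Therefore the $\delta$-neighborhood of $(x_0,y)$, intersected with the domain $\Omega\times\R$ of $F$, lies in $F^{-1}(U_\varepsilon)$. If $\varepsilon$ is small relative to the inradius of $\Omega$, the ball stays inside $\Omega\times\R$ anyway, since $\operatorname{dist}(x_0,\partial\Omega)\ge\varepsilon/2-\eta>\delta$.

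\textbf{Main obstacle.} The delicate point is Step~1. I need "comes arbitrarily close to $\mathcal{S}$" in a form that is uniform over boundary trajectories and starting times. This means making precise that the boundary trajectories $F(\{p\}\times\R)$, which live in the closure and may touch $\mathcal{S}$, are continuous limits of interior ones. I also need to handle the case where Proposition~\ref{thm: boundary arbitrarily close to singular set} is only available in one time direction. I would first try to use the semicontinuity of $(p,s)\mapsto\inf_{|t|\le T}\operatorname{dist}(F(p,s+t),\mathcal{S})$. If only forward closeness is known, I would allow $t$ of either sign, since Property~(P2') only requires $|y-y_0|\le l$. If a boundary trajectory actually hits $\mathcal{S}$, that case is handled trivially by continuity.
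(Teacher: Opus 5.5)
Your proof is correct. The paper gives no argument of its own here: Property~(P2') is imported from \cite[Lemma~3.14]{CGM}, like everything else in Section~\ref{sec: Billiard Dynamics on Polyhedra}. Your write-up therefore supplies what the paper leaves to the reference.

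You derive (P2') from the quoted Proposition~\ref{thm: boundary arbitrarily close to singular set} using only four ingredients:
\begin{enumerate}[label=(\roman*), nosep]
\item the $1$-Lipschitz extension of $F$ to $\overline{\Omega}\times\R$;
\item compactness of $\partial\Omega\times[0,L]$;
\item the identity $F(x,y+kL)=F(\mathscr{R}^k x,y)$ together with $\mathscr{R}(\partial\Omega)=\partial\Omega$;
\item the triangle inequality with $\delta=\eta=\varepsilon/8$.
\end{enumerate}
The arithmetic $\delta+\varepsilon/2+\eta+\varepsilon/8=7\varepsilon/8$ checks out. The $\delta$-ball also stays inside $\Omega\times\R$, because points of the annulus satisfy $\operatorname{dist}(x_0,\partial\Omega)>3\varepsilon/8>\delta$; so the inradius remark is unnecessary.

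Your reduction of $y_0$ to $[0,L]$, by rotating the base point along $\partial\Omega$, has a nice consequence. You never need the almost periodicity of $\mathscr{R}$ (Proposition~\ref{prop: isometry is admissible}). Without that trick, one would invoke it to get relatively dense near-returns to $\mathcal{S}$ along a single boundary line.

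Two simplifications are available:
\begin{enumerate}[label=(\alph*), nosep]
\item The finite tube condition is not needed. (P2') is used tube by tube in Section~\ref{sec: An Observability Resolvent Estimate}, so your constants may depend on the tube.
\item The worries in your ``main obstacle'' paragraph are already settled by Step~1 as written. No semicontinuity argument is required. The sign of $t$ is irrelevant, since only $|y-y_0|\le l$ is asked for. Boundary lines that actually meet $\mathcal{S}$ are handled automatically, because $F$ is defined on the closure by continuity.
\end{enumerate}
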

	
	\subsection{Almost Periodic Functions and Almost Periodic Boundary Conditions}
	\label{sec: Almost Periodic Functions and Almost Periodic Boundary Conditions}
	In this subsection, we present some basic concepts of almost periodic functions and related harmonic analysis. Similar to periodic functions, almost periodic functions also have a harmonic analysis theory. It was mainly founded by Bohr, and later generalized and refined by Bochner, von~Neumann, Besicovitch, and others. The core idea is to extend the Fourier series theory of periodic functions to those functions that are almost periodic. For more details, see \cite{LZ82}.
	
	Let $X$ be a Banach space with norm $\norm{\cdot}.$ We say a number $\tau\in\R$ is an \emph{$\varepsilon$-almost period} of $f:\R_y\rightarrow X$ if
	\begin{equation}
		\label{eq: varepsilon-almost period}
		\sup_{y\in\R_y}\norm{f(y+\tau)-f(y)}\leq\varepsilon.
	\end{equation}
	We now give the definition of almost periodic functions:
	\begin{definition}
		\label{def: almost periodic function}
		A continuous function $f:\R_y\rightarrow X$ is called \emph{almost periodic} if for any $\varepsilon>0,$ there is an $l=l(\varepsilon)>0$ such that for each $\alpha\in\R,$ the interval $(\alpha,\alpha+l)\subset\R$ contains a number $\tau=\tau(\varepsilon)$ such that \eqref{eq: varepsilon-almost period} holds.
	\end{definition}
	
	For an almost periodic function $f,$ we denote its mean value as
	\[
	\mathfrak{M}\{f\}:=\lim_{T\rightarrow\infty}\frac{1}{2T}\int_{-T}^{T}f(y)dy.
	\]
	For every almost periodic function, its mean value always exists. The \emph{Bohr transform} $a(\lambda;f)$ of $f$ for $\lambda\in\R$ is defined as
	\begin{equation}
		a(\lambda;f):=\lim_{T\rightarrow\infty}\frac{1}{2T}\int_{-T}^{T}f(y)e^{-i\lambda y}dy=\mathfrak{M}\{f(y)e^{-i\lambda y}\},
	\end{equation}
	which is the analogue of the Fourier transform in the setting of almost periodic functions. Define the \emph{spectrum} of $f$ as
	\[
	\sigma(f):=\{\lambda\in\R:a(\lambda;f)=\mathfrak{M}\{f(y)e^{-i\lambda y}\}\neq0\},
	\]
	which is an at most countable set. Denote by $\{\lambda_k\}_{k=1}^{\infty}$ the spectrum of $f$, and let $a_k:=a(\lambda_k;f),$ which is referred to as the \emph{Bohr--Fourier coefficient} of $f.$ Then $f$ can be formally expanded as the \emph{Bohr--Fourier series}:
	\begin{equation}
		\label{eq: Bohr-Fourier series}
		f(y)\sim\sum_{k}a_ke^{i\lambda_ky}.
	\end{equation}
	The fundamental theorem in this area is \emph{Bohr's Approximation Theorem}, which says that every almost periodic function $f$ can be uniformly approximated by trigonometric polynomials; conversely, if a sequence of trigonometric polynomials converges uniformly, its limit function is necessarily almost periodic. The uniqueness theorem states that if $f$ and $g$ are two almost periodic functions with $a(\lambda;f)=a(\lambda;g)$ for every $\lambda\in\R,$ then $f\equiv g.$ When $X$ is a Hilbert space, there also exists a Parseval-type identity, namely, when \eqref{eq: Bohr-Fourier series} holds, then
	\begin{equation}
		\label{eq: Parseval identity}
		\mathfrak{M}\{\norm{f(y)}^2\}=\sum_{k}\norm{a_k}^2<\infty.
	\end{equation}
	
	Now we regard the cross-section of a periodic tube as a manifold. Let $(M_x,g_x)$ be a compact Riemannian manifold with Lipschitz boundary. When we consider a function $u:M_x\times\R_y\to\C$ on the periodic tube, by the definition of periodic tubes, $u$ may have the following invariance property:
	\begin{equation}
		\label{eq: functions with invariance property}
		u(x,y+L)=u(\varphi(x),y),\quad (x,y)\in M_x\times\R_y,
	\end{equation}
	where $L>0$ is a positive number (length) and $\varphi:M_x\rightarrow M_x$ is an isometry. The invariance \eqref{eq: functions with invariance property} can be interpreted as a boundary condition for $u$ on $M_x\times[0,L]_y:$
	\begin{equation}
		\label{eq: almost periodic boundary condition}
		u(x,L)=u(\varphi(x),0),\quad x\in M_x.
	\end{equation}
	When $\varphi=\mathrm{id},$ \eqref{eq: almost periodic boundary condition} is just the periodic boundary condition. In order to address the desired resolvent estimate, we first need to impose a condition on the isometry $\varphi$ that generalizes the periodic case.
	
	We call an isometry $\varphi:M_x\to M_x$ \emph{admissible} (the induced isometry on the boundary is denoted by $\varphi|_{\partial M_x}$) if for any $\varepsilon>0,$ the set
	\[
	S(\varphi,\varepsilon)=\{k\in\Z\mid \operatorname{dist}(\varphi^{k},\mathrm{id})<\varepsilon\}
	\]
	is \emph{relatively dense} in $\Z,$ where $\operatorname{dist}(\cdot,\cdot)$ denotes the distance between mappings in $C^{\infty}(M_x;M_x).$ Here a set $A\subset\Z$ is called relatively dense if there exists an $N\in\N$ such that every set of $N$ consecutive integers contains an element of $A.$ Now we give the definition of almost periodic boundary condition:
	\begin{definition}
		We call the boundary condition in \eqref{eq: almost periodic boundary condition} \emph{almost periodic} if the isometry $\varphi$ is admissible.
	\end{definition}
	One question is: which isometries $\varphi$ are admissible? In \cite{CGM}, the authors proved the following result:
	\begin{proposition}{\cite[Proposition~2.4]{CGM}}
		\label{prop: isometry is admissible}
		Let $\varphi:M_x\rightarrow M_x$ be an isometry, then $\varphi$ is admissible.
	\end{proposition}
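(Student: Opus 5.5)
The plan is to use compactness of the isometry group of $M_x$ and a pigeonhole argument.

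\emph{Step 1: compactness.} By the Myers--Steenrod theorem, the isometry group $\mathrm{Isom}(M_x)$ of the compact Riemannian manifold $M_x$ is a compact Lie group. For manifolds with boundary, first extend or double $M_x$; alternatively, Arzel\`a--Ascoli gives compactness in $C^0$, and isometries are determined by a point together with the differential there, which upgrades this to compactness in every $C^k$ topology. In the $C^\infty$ topology, $\mathrm{dist}$ is a metric on this group that we may take to be bi-invariant, or at least invariant under composition by isometries. This is because composing with a fixed isometry preserves all $C^k$ norms of differences, up to equivalence, uniformly on the compact group.

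\emph{Step 2: recurrence.} Fix $\varepsilon>0$ and cover the compact group $G=\overline{\{\varphi^k\}}$ by finitely many balls $B_1,\dots,B_m$ of radius $\varepsilon/2$. Among any $m+1$ iterates $\varphi^{a},\dots,\varphi^{a+m}$, two lie in the same ball, say $\varphi^{i}$ and $\varphi^{j}$ with $a\le i<j\le a+m$. Invariance then gives
\[
\mathrm{dist}(\varphi^{j-i},\mathrm{id})=\mathrm{dist}(\varphi^{j},\varphi^{i})<\varepsilon .
\]
So $S(\varphi,\varepsilon)$ meets $\{1,\dots,m\}$, but this alone does not yet give relative density.

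\emph{Step 3: relative density.} Work in the compact abelian group $G$, with Haar measure or a direct pigeonhole argument. Let $V=B(\mathrm{id},\varepsilon/2)$ and cover $G$ by finitely many translates $g_1V,\dots,g_mV$. Since $\{\varphi^{k}:k\ge 0\}$ is dense in $G$ (compact monothetic recurrence), each $g_r$ is approximated by some $\varphi^{n_r}$. Let $N=\max_r n_r+1$. For any $a\in\Z$, the element $\varphi^{-a}$ lies in some $g_rV$, hence within $\varepsilon$ of $\varphi^{n_r}$. Then $\mathrm{dist}(\varphi^{a+n_r},\mathrm{id})<\varepsilon$ with $a+n_r\in[a,a+N)$, which is exactly relative density.

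The main obstacle is Step 1. One has to justify that the $C^\infty$ distance is translation-invariant and that the isometry group is compact in $C^\infty$ for a manifold with only Lipschitz boundary. I would first try to handle this by noting that an isometry is smooth in the interior and determined by its $1$-jet at an interior point, so $C^0$ convergence implies $C^\infty$ convergence.
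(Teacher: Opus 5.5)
The paper gives no proof of this proposition; it is quoted from \cite{CGM}, so there is no in-text argument to compare yours with. Your proof is correct and is the standard one. Once $G=\overline{\{\varphi^k:k\in\Z\}}$ is known to be a compact group with a compatible bi-invariant metric, Step~3 shows exactly that every window $[a,a+N)$ meets $S(\varphi,\varepsilon)$: finitely many translates of a small ball cover $G$, their centres are approximated by powers $\varphi^{n_r}$, and left-translation by $\varphi^{a}$ carries $\varphi^{-a}$ to the identity. Step~2 serves only to give density of the forward iterates, and even that can be dropped. If you allow $n_r\in\Z$, the hits lie in $[a+\min_r n_r,\,a+\max_r n_r]$, a window of fixed length, and that is all relative density asks.

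Step~1 is where the write-up should be tightened.

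(i) You do not need the $C^\infty$ distance itself to be (quasi-)invariant. Checking that composition preserves a Fr\'echet-type metric ``up to equivalence, uniformly'' is awkward and unnecessary. Instead set $\rho(g,h):=\sup_{a,b\in G}\operatorname{dist}(agb,ahb)$. Uniform continuity of $(a,g,b)\mapsto agb$ on the compact set $G^3$ makes $\rho$ a metric inducing the same topology on $G$, and it is bi-invariant by construction. Since $\rho\ge\operatorname{dist}$, you get $\{k:\rho(\varphi^k,\mathrm{id})<\varepsilon\}\subset S(\varphi,\varepsilon)$, so relative density for $\rho$ suffices.

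(ii) That an isometry is determined by its $1$-jet gives uniqueness, not convergence. The upgrade from $C^0$ to $C^\infty$ convergence comes from the local formula $g=\exp_{g(p)}\circ dg_p\circ\exp_p^{-1}$ together with compactness of the orthonormal-frame fibres. For a merely Lipschitz boundary you would also have to say what $C^\infty$ up to the boundary means.

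In the only place the paper uses the proposition, none of this arises. There $M_x=\Omega$ is a bounded convex cross-section and $\varphi=\mathscr{R}$ is a rotation preserving it (linear after centring $\Omega$ at its centre of mass). So $G$ is a closed subgroup of $O(d-1)$, on which the operator norm $\norm{A-B}$ is exactly bi-invariant. Moreover, the $C^\infty(\Omega;\Omega)$ distance between restrictions of linear maps is bounded by a constant times $\norm{A-B}$, so Step~1 is immediate.
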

	In particular, when $\varphi$ is a rotation, it is admissible, so the boundary condition \eqref{eq: almost periodic boundary condition} is almost periodic. The relation between almost periodic functions and admissible isometries is given by:
	\begin{lemma}{\cite[Lemma~2.3]{CGM}}
		Let $\varphi:M_x\rightarrow M_x$ be an admissible isometry and $u:M_x\times\R_y\to\C$ be such that $u\in C(\R_y;H^s(M_x))$ for some $s\in\R,$ satisfying that $u(x,y+L)=u(\varphi(x),y)$ for all $(x,y)$ and some $L>0$ fixed. Then the map
		\[g:\R_y\ni y\mapsto u(\cdot,y)\in H^s(M_x)\]
		is almost periodic.
	\end{lemma}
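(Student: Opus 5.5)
The plan is to verify Definition~\ref{def: almost periodic function} directly for $g(y)=u(\cdot,y)\in H^s(M_x)$. The candidate $\varepsilon$-almost periods are the numbers $\tau=kL$ with $k\in S(\varphi,\delta)$ and $\delta$ small. Iterating the invariance gives
\[u(x,y+kL)=u(\varphi^k(x),y) \quad\text{for all } k\in\Z,\]
where negative $k$ is handled by applying the identity at $y-L$ and using that $\varphi$ is invertible. Hence
\[g(y+kL)-g(y)=u(\cdot,y)\circ\varphi^k-u(\cdot,y).\]
We therefore need a uniform-in-$y$ bound on $\norm{v\circ\varphi^k-v}_{H^s(M_x)}$ for $v$ ranging over the family $\{u(\cdot,y):y\in\R\}$, whenever $\varphi^k$ is close to the identity.

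The first step is compactness. The same identity shows that $\{g(y):y\in\R\}=\{g(y)\circ\varphi^j: y\in[0,L],\,j\in\Z\}$. Since $g$ is continuous, $K_0=g([0,L])$ is compact in $H^s$. Isometries act on $H^s(M_x)$ as bounded operators, uniformly over the group generated by $\varphi$; for isometries this is clear at least for $s\ge0$ via the Riemannian structure, and for negative $s$ it follows by duality. I expect a key extra fact: the closure $G$ of $\{\varphi^j\}$ in $C^\infty$ is a compact group of isometries by Arzelà--Ascoli (Myers--Steenrod). Then the map $(\psi,v)\mapsto v\circ\psi$, $G\times H^s\to H^s$, is continuous. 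This holds because it is uniformly bounded in $\psi$ and continuous in $\psi$ for smooth $v$, with density handling the rest. As a result, the orbit $K=\{v\circ\psi:v\in K_0,\psi\in G\}$ is compact.

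The second step obtains uniform smallness. By the joint continuity and compactness from the first step, for each $\varepsilon>0$ there is $\delta>0$ such that $\operatorname{dist}(\psi,\mathrm{id})<\delta$ implies $\norm{v\circ\psi-v}_{H^s}\le\varepsilon$ for all $v\in K$. One gets this by a standard $\varepsilon/3$ argument: take a finite net of $K$ of smooth functions, use uniform boundedness of the action, and use the fact that for smooth $w$, $\norm{w\circ\psi-w}_{H^s}\to0$ as $\psi\to\mathrm{id}$ in $C^\infty$. Taking $\psi=\varphi^k$ with $k\in S(\varphi,\delta)$ gives $\sup_y\norm{g(y+kL)-g(y)}\le\varepsilon$.

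The third step uses relative density. Since $S(\varphi,\delta)$ has gaps at most $N$, every interval of length $l=(N+1)L$ contains some $kL$ with $k\in S(\varphi,\delta)$. Together with the continuity of $g$, this is exactly Definition~\ref{def: almost periodic function}.

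The main obstacle is the first step, namely continuity and uniform boundedness of the composition action on $H^s(M_x)$ on a manifold with Lipschitz boundary, especially for negative or noninteger $s$. I would first treat $s\in\N$ by the chain rule, then interpolate for fractional $s\ge0$, and use duality with $H^{-s}_0$ conventions for $s<0$. I would also note that the admissibility hypothesis only quantifies closeness along the iterates $\varphi^k$. This suffices, since the uniform estimate in the second step is only ever applied to such $\psi$.
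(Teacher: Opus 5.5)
The paper gives no proof of this lemma; it is quoted from \cite{CGM}, so there is no in-paper argument to compare against. Your argument is correct. Choosing the almost periods $kL$ with $k\in S(\varphi,\delta)$, using $g(y+kL)-g(y)=g(y)\circ\varphi^k-g(y)$, running an $\varepsilon/3$ argument over a finite net of smooth functions, and using the gap bound from relative density gives exactly Definition~\ref{def: almost periodic function}.

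The one place where you work harder than needed is the first step. The compact closure $G$ of $\{\varphi^j\}$, the Arzel\`a--Ascoli/Myers--Steenrod input, and joint continuity on $G\times H^s$ can all be dropped. Write $y=y_0+jL$ with $y_0\in[0,L)$. Then
\[
g(y+kL)-g(y)=\bigl(g(y_0)\circ\varphi^{k}-g(y_0)\bigr)\circ\varphi^{j}.
\]
You already note that composition with $\varphi^j$ is bounded on $H^s(M_x)$ uniformly in $j$; it is even isometric for the intrinsic norms. So it suffices to make $\norm{v\circ\varphi^k-v}_{H^s}$ uniformly small over the compact set $K_0=g([0,L])$, and your net argument does this verbatim.

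This shorter route avoids the most delicate point of your plan, namely taking closures of isometries in $C^\infty$ up to a merely Lipschitz boundary. It also keeps admissibility as the sole dynamical input. In your version, compactness of $G$ by itself already forces $S(\varphi,\delta)$ to be relatively dense, which is essentially the content of Proposition~\ref{prop: isometry is admissible}. That makes the admissibility hypothesis redundant in your route. What your route buys in exchange is uniform smallness for all $\psi\in G$ near the identity rather than only for iterates, which the lemma does not need.
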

	
	\begin{remark}
		As stated in \cite{CGM}, the almost periodic boundary condition mentioned above is related to periodic tubes defined in the previous subsection as follows. Let $u\in\mathcal{C}(D_0)$ and take a periodic tube $F:\Omega_x\times\R_y\rightarrow D_0$ of length $L$ and associated rotation $\mathscr{R}.$ Consider the pullback $F^*u$ to $\Omega_x\times\R_y.$ By the definition of periodic tubes, we have
		\[F^*u(x,y+L)=F^*u(\mathscr{R}x,y),\quad(x,y)\in\Omega_x\times\R_y,\]
		which by Proposition~\ref{prop: isometry is admissible} means that $F^*u$ satisfies the almost periodic boundary condition \eqref{eq: almost periodic boundary condition}.
	\end{remark}
	
	\section{An Observability Resolvent Estimate}
	\label{sec: An Observability Resolvent Estimate}
	In this section we present an observability resolvent estimate which will be used later in the proof of Theorem~\ref{thm1.1}. Its proof uses semiclassical measures and relies closely on the dynamical properties of the billiard flow in polyhedra. Our proof essentially follows the line of proof in \cite{CGM}.
	\begin{proposition}
		\label{thm: obsr resolv estim}
		Let $P$ be a convex polyhedron in $\R^d$, and let $U$ be an arbitrary nonempty open neighborhood of the singular set $\mathcal{S}$ inside $P$. Then there exists $\lambda_0>0$ such that for any $\lambda\in\R$ with $|\lambda|>\lambda_0$ and any solution of
		\begin{equation}
			\label{eq: Helmholtz equation}
			(-\Delta-\lambda)u = f,\quad u|_{\partial P}=0,
		\end{equation}
		where $f\in H^s(P)\,(s>d/2)$, there exists a constant $C = C(U) > 0$ such that the following observability resolvent estimate holds:
		\begin{equation}
			\label{eq: obsr resolv estim}
			\norm{u}_{L^2(P)}\leq C\left(\norm{u}_{L^2(U)}+\norm{f}_{L^2(P)}\right).
		\end{equation}
	\end{proposition}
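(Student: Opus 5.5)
We argue by contradiction and use semiclassical defect measures. For $\lambda<-\lambda_0$ the estimate is trivial: pairing \eqref{eq: Helmholtz equation} with $\bar u$ gives $\norm{\nabla u}^2+|\lambda|\norm{u}^2\le\norm{f}\norm{u}$, so $\norm{u}\le\lambda_0^{-1}\norm{f}$. We may therefore assume $\lambda\to+\infty$ and write $\lambda=h^{-2}$. If \eqref{eq: obsr resolv estim} fails, there are sequences $h_n\to0$, $u_n$, $f_n$ with $\norm{u_n}_{L^2(P)}=1$, $\norm{u_n}_{L^2(U)}\to0$ and $\norm{f_n}_{L^2(P)}\to0$. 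In semiclassical form this reads $(-h_n^2\Delta-1)u_n=h_n^2f_n=o(h_n^2)$ in $L^2$, which is better than the $o(h)$ needed for invariance. After passing to a subsequence we obtain a semiclassical measure $\mu$ on $S^*P$ (away from $\partial P$, and via the double $D$ or the Melrose--Sjöstrand/Lebeau boundary measure near the faces) with total mass $1$. Compactness uses the energy identity $\norm{h\nabla u_n}^2=\norm{u_n}^2+o(1)$, which rules out loss of mass at infinite frequency. The measure $\mu$ is supported on $\{|\xi|=1\}$, vanishes over $U$, and is invariant under the billiard flow away from $\mathcal S$. Extending $u_n$ by odd reflection to $D$ turns the Dirichlet problem into an equation on the flat manifold $D_0$, where invariance under the geodesic flow is standard.

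Next we use the dynamics to locate $\mu$. Fix $\varepsilon$ with $U_\varepsilon\cap P\subset U$. By invariance and $\mu(U)=0$, $\mu$ charges only trajectories that never enter $U_\varepsilon$. By Proposition~\ref{thm: finite tube condition} these lie in finitely many periodic tubes $\mathscr C_1,\dots,\mathscr C_n$. We also need to exclude mass near $\mathcal S$ itself; this follows because $\mathcal S$ is contained in $U$. Hence $\mu$ lives in the lifted tubes over a region of each cross-section away from $\partial\mathscr C_i$. By Property~(P2'), points of the annular layer $\Omega_{\varepsilon/2-\eta}\setminus\Omega_{\varepsilon/2+\eta}$ with direction near the tube direction reach $U_\varepsilon$ in bounded time, so $\mu$ vanishes there as well. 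It therefore suffices to show that no mass survives in $\Omega_{\varepsilon/2}\times\R$ of any tube.

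The key step, following \cite{CGM}, is a propagation/commutator argument inside each tube. Take a cutoff $\chi(x)$ supported in $\Omega_{\varepsilon/2-\eta}$, equal to $1$ on $\Omega_{\varepsilon/2+\eta}$, and invariant under the rotation $\mathscr R$. Pull back $u_n$ by $F$; the function $v_n=\chi F^*u_n$ then satisfies the almost periodic boundary condition in $y$, by Proposition~\ref{prop: isometry is admissible} and the lemma on almost periodicity. It solves
\[
(-h^2\Delta-1)v_n=h^2\chi F^*f_n+[-h^2\Delta,\chi]F^*u_n .
\]
The commutator involves $h^2\nabla_x\chi\cdot\nabla_x$ and is supported in the annulus. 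On a region where the measure is concentrated in the $\xi_y$ direction, the bound $h\nabla_x u_n=o(1)$ there makes the commutator $o(h^2)$, up to the contribution of the part of the annulus controlled by (P2'). Expanding $v_n$ in Bohr--Fourier series in $y$ and using Parseval \eqref{eq: Parseval identity} reduces the problem to a family of Dirichlet problems on the cross-section $\Omega$. For each Bohr frequency $k$ we get an operator $-\Delta_x+k^2-h^{-2}$, and a one-dimensional Fourier/positivity argument in $y$, or the $x$-Laplacian's spectral gap, gives $\norm{v_n}\lesssim h^{-2}\norm{\text{RHS}}$. Combined with the $o(h^2)$ bounds, this yields $v_n\to0$, contradicting $\norm{u_n}=1$.

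For rational $P$, case (2)(a) of Proposition~\ref{thm: w periodic} applies: after $q$ iterations the tube is genuinely periodic, and ordinary Fourier series suffice. For irrational $P$ we need the almost periodic machinery, and this is where the regularity $f\in H^s$, $s>d/2$, enters. Continuity makes the pullback $F^*u_n$ well defined pointwise along the immersed tube, so that the almost periodic lemma applies, presumably in $C(\R_y;H^s)$ with quantitative control.

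We expect the main obstacle to be this last step: getting the commutator term small enough in the normal directions. Invariance of $\mu$ alone gives no information about mass traveling exactly along the tube direction. We would first try the \cite{CGM}/\cite{BZ04} trick of treating the $x$-derivative term as $o(h)\cdot h\partial_x$ on the support of $\nabla\chi$, exploiting that $\mu$ there vanishes by (P2'), and then absorb it using the $h^{-1}$ resolvent bound for a transversal direction. Failing that, we would use a second-microlocalization in $\xi_x$.
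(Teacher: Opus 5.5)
Your overall architecture matches the paper: contradiction, a semiclassical measure that vanishes over $U$, confinement to finitely many periodic tubes, Property~(P2'), an $\mathscr R$-invariant cutoff $\chi$ in the cross-section, and a Bohr--Fourier reduction in $y$. Your energy argument for $\lambda<-\lambda_0$ is also correct. The gap is exactly the step you flag as the main obstacle, and the mechanism you propose there fails for two reasons.

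\emph{No resolvent bound without observation.} There is no bound $\norm{v_n}\lesssim h^{-2}\norm{(-h^2\Delta-1)v_n}$ on the tube without an observation term. The number $h^{-2}-k^2$ can equal a Dirichlet eigenvalue of $\Omega$; for instance $e^{2\pi i m y/L}\phi_1(x)$, with $\phi_1$ the $\mathscr R$-invariant ground state of $\Omega$, is an exact mode of the mapping torus. So no spectral gap is available. Note also that your argument never uses the fact that $v_n$ vanishes near $\partial\Omega$. What the paper uses instead is the control estimate \cite[Theorem~5.4]{CGM} on $\mathscr C_\varphi=\Omega\times[0,L]/(x,L)\sim(\mathscr R x,0)$:
\[
\norm{w}_{L^2(\mathscr C_\varphi)}\le C\bigl(\norm{w}_{L^2(\omega_\varphi)}+\norm{(-\Delta-\lambda)w}_{H^{-1}_xL^2_y(\mathscr C_\varphi)}\bigr),
\]
with $C$ uniform in $\lambda$ and $\omega$ a neighbourhood of $\partial\Omega$ inside $\{\chi=0\}$. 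This is what your Bohr--Fourier reduction yields if the cross-sectional input is a $z$-uniform observability estimate for $-\Delta_\Omega-z$ from $\omega$ with $H^{-1}$ source, not a spectral gap.

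\emph{The commutator is not $o(h^2)$.} We have $h^2\nabla_x\chi\cdot\nabla_xu_n=h\,\nabla_x\chi\cdot(h\nabla_xu_n)$, which is at best $o(h)$ in $L^2$. In unscaled units that is $o(h^{-1})$, useless against any estimate with an $L^2$ source. The paper's way out is the $H^{-1}_x$ norm. Writing $\nabla_x\chi\cdot\nabla_xu=\nabla_x\cdot(u\nabla_x\chi)-(\Delta\chi)u$ gives
\[
\norm{[\Delta,\chi]u_n}_{H^{-1}_xL^2_y}\lesssim\norm{u_n\nabla_x\chi}_{L^2}+\norm{(\Delta\chi)u_n}_{L^2},
\]
which is $o(1)$ once the relevant measure vanishes on $\operatorname{supp}\nabla\chi$.

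\emph{Missing directional cutoff.} That last vanishing needs an ingredient you omit. Property~(P2') only kills $\mu$ over the annulus $\Omega_{\varepsilon/2-\eta}\setminus\Omega_{\varepsilon/2+\eta}$ for directions in a small cone around the tube direction $dy$. Other tubes may cross this annulus in other directions and carry mass there. So your claim that $\mu$ vanishes on the annulus ``as well'' is unjustified, and $\norm{u_n\nabla_x\chi}_{L^2}$ need not tend to $0$. The paper inserts a Fourier multiplier $\Phi_n=a(h_nD)$, with $a$ supported in that cone, equal to $1$ near $dy$, and satisfying $a\circ\mathscr R=a$. By \cite[Proposition~2.5]{CGM}, $w_n=\Phi_n\chi u_n$ is still $\varphi$-invariant and $\Phi_n$ commutes with $\Delta$. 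Its measure $|a|^2\chi^2\mu$ does vanish over $\operatorname{supp}\nabla\chi$, so $\norm{\Phi_n(u_n\nabla_x\chi)}_{L^2}\to0$. The tube estimate then gives $w_n\to0$, meaning $\mu$ carries no mass over that tube in the direction $dy$. The contradiction with $\mu(S^*D_0)=1$ comes only after repeating this for each of the finitely many tubes, not from a single $v_n\to0$ against $\norm{u_n}=1$.
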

	\begin{remark}
		\label{remark: why s>d/2}
		Here we require that $f\in H^s(P)\,(s>d/2)$ so that, by the Sobolev embedding theorem, $f$ and hence $u$ are continuous in the spatial variables. This is because we will use a cutoff function to obtain a stationary equation on a periodic tube, where functions on the periodic tube are almost periodic along the direction of the tube (Definition~\ref{def: almost periodic function} is Bohr's definition of almost periodic function, which requires the function to be continuous with respect to its variable), and thus can be expanded into Bohr--Fourier series, so as to obtain the control result on the periodic tube using dimension reduction.
	\end{remark}
	\begin{proof}
		Recall that $D=(P\cup\sigma P)/{\sim}$ is the double of $P$ (see Section~\ref{sec: Billiard Dynamics on Polyhedra}). To facilitate our subsequent analysis, we extend $u$ and $f$ oddly to $\sigma P$ (the extended functions are still denoted by $u$ and $f$) so that they still satisfy the equation \eqref{eq: Helmholtz equation} on $D$ (the boundary condition now becomes $u|_{\partial D}=0$). By a slight abuse of notation, we regard $U$ as a neighborhood of the singular set $\mathcal{S}$ in both $D$ and $P.$ We will prove on $D$ the same result as \eqref{eq: obsr resolv estim} (with $P$ replaced by $D$), which automatically implies the conclusion of this proposition.
		
		We prove by contradiction. If \eqref{eq: obsr resolv estim} were false, there would exist sequences $\lambda_n\to\infty$ and $u_n,f_n$ satisfying
		\begin{equation}
			(-\Delta-\lambda_n)u_n=f_n
		\end{equation}
		such that
		\begin{equation}
			\label{eq: vanishing assumption}
			\norm{u_n}_{L^2(D)}=1,\quad \norm{u_n}_{L^2(U)}\to0,\quad \norm{f_n}_{L^2(D)}\to0.
		\end{equation}
		Now we only consider the case $\lambda_n>0$; the argument for $\lambda_n<0$ is exactly the same up to some sign changes. We introduce a semiclassical parameter $0<h_n\ll1$ by $h^2_n:=\frac{1}{\lambda_n}.$ Then we have
		\begin{equation}
			(-h^2_n\Delta-1)u_n=o_{L^2}(h_n).
		\end{equation}
		
		By \eqref{eq: vanishing assumption} we know that the sequence $(u_n)$ is bounded in $L^2(D),$ so there exists a semiclassical measure $\mu$ on $S^*D_0$ (where $D_0=D\setminus\mathcal{S}$) associated with $(u_n)$, possibly after extracting a subsequence, such that for any $a\in\mathcal{C}_0^{\infty}(S^*D_0)$ we have
		\begin{equation}
			\label{eq: def of semiclassical measure}
			\lim_{n\to\infty}\langle a(x,h_nD)u_n,u_n\rangle_{L^2(D_0)}=\int_{S^*D_0}a\,d\mu.
		\end{equation}
		Let $U_0:=U\setminus\mathcal{S}$ and denote by $\pi:S^*D_0\to D_0$ the base point projection. For $\mu$ we have the following lemma:
		\begin{lemma}
			\label{lemma: semiclassical measure}
			The support of $\mu$ is disjoint from $\pi^{-1}(U_0)$ and $\mu$ is a probability measure which is invariant under the geodesic flow.
		\end{lemma}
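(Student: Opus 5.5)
We must prove three things about $\mu$: it vanishes over $U_0$, it has total mass one (no mass lost at infinity in the fibres or at the singular set), and it is invariant under the geodesic flow on $S^*D_0$. We treat them in that order, using only \eqref{eq: vanishing assumption} and the equation $(-h_n^2\Delta-1)u_n=o_{L^2}(h_n)$.

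\emph{Support away from $\pi^{-1}(U_0)$.} Take $a\in\mathcal{C}_0^\infty(S^*D_0)$ with $\pi(\operatorname{supp}a)\subset U_0$. Choose $\chi\in\mathcal{C}_0^\infty(U_0)$ with $\chi=1$ near $\pi(\operatorname{supp}a)$. Then $a(x,h_nD)u_n=a(x,h_nD)\chi u_n+O(h_n^\infty)$, so $|\langle a(x,h_nD)u_n,u_n\rangle|\le C\norm{u_n}_{L^2(U)}+o(1)\to0$. Hence $\int a\,d\mu=0$ for all such $a$, and $\mu(\pi^{-1}(U_0))=0$.

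\emph{Localization on the energy shell and total mass.} Ellipticity of $-h_n^2\Delta-1$ off $\{|\xi|=1\}$ shows the limit measure on $T^*D_0$ is carried by $S^*D_0$. There is no escape to infinity in $\xi$, since $\norm{h_n\nabla u_n}^2=\langle -h_n^2\Delta u_n,u_n\rangle=1+o(1)$ gives $h$-oscillation. For the mass, the key point is that the only place mass could hide outside $D_0$ is $\mathcal{S}$, and $\mathcal{S}\subset U$. Choose $\chi\in\mathcal{C}_0^\infty(D_0)$ with $\chi=1$ outside a neighborhood $V\Subset U$ of $\mathcal{S}$. Then
\[
1=\norm{u_n}^2_{L^2(D)}=\norm{\chi u_n}^2+O(\norm{u_n}^2_{L^2(U)})=\langle \chi^2u_n,u_n\rangle+o(1).
\]
Passing to the limit, using $h$-oscillation to replace $\chi^2$ by a compactly supported symbol $\chi^2(x)\psi(|\xi|)$, gives $\mu(S^*D_0)\ge\int\chi^2\,d\mu=1$. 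Since the total mass is trivially at most $\limsup\norm{u_n}^2=1$, $\mu$ is a probability measure.

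\emph{Invariance.} For $a\in\mathcal{C}_0^\infty(T^*D_0)$, compute
\[
\frac{i}{h_n}\langle[-h_n^2\Delta,\operatorname{Op}_h(a)]u_n,u_n\rangle=\frac{i}{h_n}\bigl(\langle \operatorname{Op}_h(a)u_n,g_n\rangle-\langle \operatorname{Op}_h(a)g_n,u_n\rangle\bigr),
\]
with $g_n=(-h_n^2\Delta-1)u_n=o(h_n)$. The right-hand side is $o(1)$. The self-adjointness step uses the Dirichlet condition, and since $a$ is supported in the interior it needs no boundary analysis. The left-hand side tends to $\int\{|\xi|^2,a\}\,d\mu$, so $\int H_pa\,d\mu=0$, which is invariance under the geodesic flow in the interior of $D_0$.

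\emph{Main obstacle.} The hard part is that trajectories in $D$ do meet $\partial D$, so the flow in $S^*D_0$ must be understood via the doubling/unfolding, with no boundary points left except $\mathcal{S}$. I would argue that, after the odd extension to the double, the face points of $P$ become interior points of $D$: the odd extension of a Dirichlet solution is a genuine $H^2_{\mathrm{loc}}$ solution across the glued faces. Consequently $D_0$ is a flat manifold without boundary, and the commutator argument applies locally everywhere in $D_0$. The required care is checking that the odd extension is smooth across faces away from $\mathcal{S}$, for example by local reflection symmetry of $-\Delta$ in flat charts.
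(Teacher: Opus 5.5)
Your proposal is correct and follows the paper's route. You test $\mu$ against symbols localized over $U_0$ and use $\|u_n\|_{L^2(U)}\to 0$; you normalize with a spatial cutoff equal to $1$ away from a small neighborhood of $\mathcal{S}$; and you obtain flow invariance from the standard commutator argument, which the paper only cites. Your version is more careful than the paper's in three places: you take the cutoff compactly supported in $D_0$ (the paper's cutoff equals $1$ on $D\setminus U_0$, which contains $\mathcal{S}$), you justify that no mass escapes to $|\xi|\to\infty$, and you justify that the odd extension solves the equation across the glued faces of the double.
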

		\begin{proof}
			Suppose to the contrary that there is a point $q\in\operatorname{supp}\mu$ with $\pi(q)\in U_0$. Then there exists a small neighborhood $V$ of $q$ such that $V\subseteq U_0.$ Choose a nonnegative function $\phi\in\mathcal{C}^{\infty}(D_0)$ supported in $U_0$ with $\phi\equiv1$ in $V.$ Since $\mu$ is a nonnegative measure, we have $\langle\mu,\phi\rangle\geq0.$ If $\langle\mu,\phi\rangle=0,$ then for any $\chi\in\mathcal{C}_0^{\infty}(S^*D_0)$ supported in $\pi^{-1}(V),$ we have $\chi=\chi\phi,$ and hence $\langle\mu,\chi\rangle=\langle\mu,\chi\phi\rangle.$ By the nonnegativity of $\mu$ and $\phi,$ we have $|\langle\mu,\chi\phi\rangle|\leq\langle\mu,\phi\rangle\|\chi\|_{\infty}=0.$ This means that $\pi^{-1}(V)$ is disjoint from the support of $\mu,$ contradicting our earlier assumption; hence $\langle\mu,\phi\rangle>0$. This means that
			\[\lim\limits_{n\to\infty}\langle\phi u_n,u_n\rangle_{L^2(U_0)}>0,\]
			which contradicts our earlier assumption \eqref{eq: vanishing assumption}.
			
			To show that $\mu$ is a probability measure, we can take a cutoff function equal to $1$ on $D\setminus U_0$. Since we have already shown that the support of $\mu$ is disjoint from $\pi^{-1}(U_0),$ using the definition of the semiclassical measure \eqref{eq: def of semiclassical measure} together with the assumption that $u_n$ is normalized (see \eqref{eq: vanishing assumption}) we obtain the result. Finally, the invariance of $\mu$ under the geodesic flow is a standard property of semiclassical measures (see for example \cite[Section~5.2]{Zwor12}).
		\end{proof}
		\begin{remark}
			\label{what invariance means}
			The invariance of $\mu$ under the geodesic flow implies that if $(z,\xi)\in\operatorname{supp}\mu,$ then $(z+t\xi,\xi)\in\operatorname{supp}\mu$ for all $t\in\R.$
		\end{remark}
		
		Now we continue the proof of Proposition~\ref{thm: obsr resolv estim}. By Lemma~\ref{lemma: semiclassical measure}, what remains is to prove that $\mu$ vanishes on $S^*D_0$, which would contradict the fact that $\mu$ is a probability measure.
		
		As in \cite{CGM}, before proving the general case, we first present the proof for the case of rational polyhedra, which is easier and contains some preliminary ideas.
		
		\noindent\emph{Proof of Proposition~\ref{thm: obsr resolv estim} for Rational Polyhedra.}
		Let $(z_0,\xi_0)$ be in the support of $\mu.$ By Lemma~\ref{lemma: semiclassical measure} and Remark~\ref{what invariance means} we know that the billiard trajectory determined by $(z_0,\xi_0)$ is disjoint from $U_0,$ so by Corollary~\ref{thm: dichotomy} we can get that $z_0$ belongs to a maximal periodic tube in the direction of $\xi_0.$ Thus the finite tube condition (see Proposition~\ref{thm: finite tube condition}) gives that the support of $\mu$ is contained in the union of finitely many lifted maximal periodic tubes $\bigcup_{i=1}^{N}\mathscr{C}_i.$ This means that there are only finitely many directions in the support of $\mu.$ Without loss of generality, we assume that $U$ is the $\varepsilon$-neighbourhood $U_{\varepsilon}$ of $\mathcal{S}$ for an $\varepsilon>0.$
		
		Choose one such tube $\mathscr{C}:=\mathscr{C}_i.$ For a rational polyhedron, by Proposition~\ref{thm: w periodic} we know that every geodesic in $\mathscr{C}$ is periodic, and we can find an $L>0$ such that all geodesics in $\mathscr{C}$ have period $L.$ By the definition of periodic tubes, $\mathscr{C}$ is the image under a local isometry $F:\Omega\times\R\rightarrow D,$ where $\Omega$ is a convex polygon. Using the local isometry $F$ we can pull back $u_n$ to $\mathscr{C}.$ Denote the coordinates on $\Omega$ by $x$ and on $\R$ by $y.$ Let $(\partial\Omega)^{\varepsilon}:=\{x\in\Omega\mid d(x,\partial\Omega)\leq\varepsilon\}$ (where $d(\cdot,\cdot)$ is the distance in $\Omega$) be the $\varepsilon$-neighborhood of $\partial\Omega$ inside $\Omega,$ and let $\Omega_{\varepsilon}:=\Omega\setminus(\partial\Omega)^{\varepsilon}.$ By the definition of $\Omega_{\varepsilon}$ we can see that $\Omega_{\varepsilon}\times\R$ does not intersect $F^{-1}(U_{\varepsilon})$ due to Proposition~\ref{thm: boundary arbitrarily close to singular set}. Now choose a smooth cutoff function $\chi(x)$ such that $\chi=1$ inside $\Omega_{\varepsilon}$ and $\chi=0$ outside $\Omega_{\varepsilon/2}.$ Let $v_n:=\chi u_n;$ then the semiclassical measure $\nu$ associated with the sequence $(v_n)$ satisfies $\nu=\chi^2\mu.$ Since there are only finitely many directions in $\operatorname{supp}\mu$ and hence in $\operatorname{supp}\nu,$ we can find a constant-coefficient semiclassical pseudodifferential operator $\Phi=a(hD)$ with symbol $a(\xi)$ in $\R^n,$ such that $a(\xi)$ is microlocally equal to $1$ in a neighborhood of $\xi_0=dy$ (the direction of $\mathscr{C}$), but vanishes microlocally in a neighborhood of every other direction in $\operatorname{supp}\nu.$
		
		Let $\Phi_n:=a(h_nD).$ Now consider the sequence of functions $w_n:=\Phi_nv_n$ on $\mathscr{C}.$ The semiclassical measure $\nu'$ associated with the sequence $(w_n)$ is related to $\nu$ and $\mu$ by $\nu'=|a|^2\nu=|a|^2\chi^2\mu.$ Since we have assumed that $U=U_{\varepsilon}$ (the $\varepsilon$-neighborhood of the singular set $\mathcal{S}$) and that $\mu$ vanishes on $S^*U_0$ (see Lemma~\ref{lemma: semiclassical measure}), by our choices of $\Phi,\chi$ and the invariance of $\mu$ under the geodesic flow, together with Proposition~\ref{thm: boundary arbitrarily close to singular set}, we have that the support of $\nu'$ is restricted to directions parallel to $dy$ and to the geodesics parametrized by $x$ such that $\chi(x)=1$ (i.e., $\Omega_{\varepsilon}$).
		
		By a direct computation we know that $\Phi$ commutes with constant-coefficient differential operators; thus $w_n$ satisfies the following equation on $\Omega\times[0,L]$:
		\begin{equation}
			\label{eq: equ with cutoff}
			\begin{split}
				(-\Delta_{\Omega}-\partial^2_y-\lambda_n)w_n
				&=-\Phi_n((\Delta\chi)u_n)-2\Phi_n(\nabla_x\chi\cdot\nabla_xu_n)+\Phi_n(\chi f_n),\\
				w_n(x,0)
				&=w_n(x,L).
			\end{split}
		\end{equation}
		where the right-hand side of the equation in \eqref{eq: equ with cutoff} is just $[\Phi_n\chi,\Delta]u_n+\Phi_n(\chi f_n).$ Choose $\omega\subset\Omega$ to be a small enough neighborhood of $\partial\Omega$ such that $\omega$ is contained in the set $\{\chi=0\};$ for example, we can choose $\omega=\Omega\setminus\Omega_{\varepsilon/2}.$ Let $\varphi(x,y):=(x,y+L)$ and let $\mathscr{C}_{\varphi}:=\Omega\times[0,L]/(x,L)\sim(x,0)$ be the mapping torus determined by $\varphi.$ In addition, let $\omega_{\varphi}:=\omega\times[0,L]/(x,L)\sim(x,0)$ be defined in the same way. Now we can apply \cite[Theorem~5.4]{CGM} to obtain
		\[
		\norm{w_n}_{L^2(\mathscr{C}_{\varphi})}\leq C\left(\norm{w_n}_{L^2(\omega_{\varphi})}+\norm{-\Phi_n((\Delta\chi)u_n)-2\Phi_n(\nabla_x\chi\cdot\nabla_xu_n)+\Phi_n(\chi f_n)}_{H^{-1}_xL^2_y(\mathscr{C}_{\varphi})}\right).
		\]
		Since $\omega$ is contained in the set $\{\chi=0\},$ we have
		\[
		\norm{w_n}_{L^2(\omega_{\varphi})}\to0~(n\to\infty).
		\]
		Now we estimate the second term on the right-hand side of the above inequality. Let
		\[
		g_n:=\Phi_n((\Delta\chi)u_n)+2\Phi_n(\nabla_x\chi\cdot\nabla_xu_n).
		\]
		Since $\Phi_n$ commutes with constant-coefficient differential operators, we have
		\begin{equation*}
			\begin{split}
				\norm{g_n}_{H^{-1}_xL^2_y(\mathscr{C}_{\varphi})}
				&=\norm{\Phi_n((\Delta\chi)u_n)+2\Phi_n(\nabla_x\chi\cdot\nabla_xu_n)}_{H^{-1}_xL^2_y(\mathscr{C}_{\varphi})}\\
				&\leq C\left(\norm{\Phi_n((\Delta\chi)u_n)}_{H^{-1}_xL^2_y(\mathscr{C}_{\varphi})}+\norm{\nabla_x\cdot(\Phi_n(u_n\nabla_x\chi))}_{H^{-1}_xL^2_y(\mathscr{C}_{\varphi})}\right)\\
				&\leq C\left(\norm{\Phi_n((\Delta\chi)u_n)}_{L^2_{x,y}(\mathscr{C}_{\varphi})}+\norm{\Phi_n(u_n\nabla_x\chi)}_{L^2_{x,y}(\mathscr{C}_{\varphi})}\right).
			\end{split}
		\end{equation*}
		We can deduce that the two terms on the right-hand side of the above inequality both tend to $0$ as $n\to\infty$, since the cutoff $\chi$ we chose satisfies $\operatorname{supp}(\nabla\chi)\subset(\Omega_{\varepsilon/2}\setminus\Omega_{\varepsilon})$, but the semiclassical measure $\nu'$ vanishes on $(\Omega_{\varepsilon/2}\setminus\Omega_{\varepsilon})\times\R$. Thus we have
		\[
		\norm{g_n}_{H^{-1}_xL^2_y(\mathscr{C}_{\varphi})}\to0~(n\to\infty).
		\]
		Moreover, by \eqref{eq: vanishing assumption} we have
		\[
		\norm{\Phi_n(\chi f_n)}_{H^{-1}_xL^2_y(\mathscr{C}_{\varphi})}\leq\norm{\Phi_n(\chi f_n)}_{L^2_{x,y}(\mathscr{C}_{\varphi})}\to0~(n\to\infty).
		\]
		Combining the above estimates, we have
		\[
		\norm{w_n}_{L^2(\mathscr{C}_{\varphi})}\leq C\left(\norm{w_n}_{L^2(\omega_{\varphi})}+\norm{g_n}_{H^{-1}_xL^2_y(\mathscr{C}_{\varphi})}+\norm{\Phi_n(\chi f_n)}_{H^{-1}_xL^2_y(\mathscr{C}_{\varphi})}\right)\to0~(n\to\infty).
		\]
		This implies that $\nu',$ and hence $\nu,$ does not have any mass on $\mathscr{C}$ in the direction of $dy.$ Applying the above argument to each periodic tube $\mathscr{C}_i$ we can obtain that $\mu \equiv 0$, since the choice of $(z_0,\xi_0)\in\operatorname{supp}\mu$ was arbitrary. This contradicts the fact that $\mu$ is a probability measure.
		
		Now we give the proof in the general case, that is, the case of irrational polyhedra. In contrast with the rational case, for an irrational polyhedron, trajectories in a periodic tube need not be individually periodic. Consequently, the periodic boundary condition in \eqref{eq: equ with cutoff} will be replaced by an almost periodic boundary condition, and our proof will rely on the Property~(P2').
		
		\noindent\emph{Proof of Proposition~\ref{thm: obsr resolv estim} for General Case.}
		As before, let $\mu$ be the semiclassical measure associated with the sequence $(u_n),$ and let $(z_0,\xi_0)$ be in the support of $\mu.$ Using the same argument as in the case of rational polyhedra, we can get that the trajectory of $(z_0,\xi_0)$ under the geodesic flow is contained in a maximal periodic tube $\mathscr{C}$ in the direction of $\xi_0.$ The difference is that the shape of the cross-section of the maximal periodic tube changes. Assume that the maximal periodic tube $\mathscr{C}$ is of length $L,$ with associated rotation $\mathscr{R}.$ By definition, there is a local isometry $F:\Omega\times\R\rightarrow \mathscr{C},$ where $\Omega$ is a convex set invariant under $\mathscr{R}.$ Using this local isometry, by a slight abuse of notation, we identify $\Omega\times\R$ with $\mathscr{C}$ and $F^*u_n$ with $u_n;$ we also pull back the measure $\mu$ to $\mathscr{C}.$ Still denote the coordinates on $\Omega$ by $x$ and on $\R$ by $y.$ Note that by the definition of periodic tubes (see Definition~\ref{def: periodic tube}), we have $F\circ\varphi=F,$ where $\varphi(x,y):=(\mathscr{R}^{-1}x,y+L);$ thus $\mu$ is invariant under $\varphi.$ Recall that $(\partial\Omega)^{\varepsilon}=\{x\in\Omega\mid d(x,\partial\Omega)\leq\varepsilon\}$ and $\Omega_{\varepsilon}=\Omega\setminus(\partial\Omega)^{\varepsilon}.$
		
		Choose $\chi\in\mathcal{C}_0^{\infty}(\Omega)$ invariant under $\mathscr{R}$ and such that
		\[
		\chi(x)=\begin{cases}
			1,&x\in\Omega_{\varepsilon/2+\eta},\\
			0,&x\in\Omega\setminus\Omega_{\varepsilon/2},
		\end{cases}
		\]
		where $\eta$ is a sufficiently small positive constant. Let $v_n:=\chi u_n.$ Then the semiclassical measure associated with the sequence $(v_n)$ is $\nu=\chi^2\mu.$
		
		Take now $\Phi=a(hD),$ a constant-coefficient semiclassical $\Psi$DO with symbol $a(\xi)$ in $\R^n,$ microlocally cutting off near $dy$ and constructed as follows. Using the Property~(P2'), we may take $a(\xi)$ supported in a small cone $\Gamma\subset\R^n\setminus0$ around $dy,$ such that all lines in the direction of $\Gamma$ with basepoint $x\in\Omega_{\varepsilon/2-\eta}\setminus\Omega_{\varepsilon/2+\eta}$ hit the set $F^{-1}(U_{\varepsilon})$ in finite time, where $U_{\varepsilon}$ is the $\varepsilon$-neighborhood of the singular set $\mathcal{S}$. Moreover, we may take $a(\xi)$ equal to $1$ near $dy$ and invariant under rotations around $dy,$ i.e., $a\circ\mathscr{R}=a.$
		
		Let $w_n:=\Phi_nv_n,$ where $\Phi_n=a(h_nD).$ By \cite[Proposition~2.5]{CGM} we know that $w_n$ is invariant under $\varphi$ if $v_n$ is invariant under $\varphi$ and $a\circ\mathscr{R}=a,$ and consequently we can pull back $w_n$ to $\mathscr{C}_{\varphi}$ where $\mathscr{C}_{\varphi}:=\Omega\times[0,L]/(x,L)\sim(\mathscr{R}x,0)$ is the mapping torus determined by $\varphi.$ The semiclassical measure $\nu'$ associated with $(w_n)$ also satisfies $\nu'=|a|^2\nu=|a|^2\chi^2\mu.$ Let $U=U_{\varepsilon}$ again. Since $\mu$ vanishes on $F^{-1}(U_{\varepsilon}),$ by our choices of $\Phi,\chi$ and the invariance of $\mu$ under the geodesic flow, together with the description in the previous paragraph, we have that $\nu'=0$ on $(\Omega\setminus\Omega_{\varepsilon/2+\eta})\times\R.$ By \cite[Proposition~2.5]{CGM} again we know that $\Phi_n$ commutes with constant-coefficient differential operators on the periodic tube; thus $w_n$ satisfies the following equation on $\Omega\times[0,L]$ with an almost periodic boundary condition:
		\begin{equation}
			\label{eq: equ with cutoff in irration}
			\begin{split}
				(-\Delta_{\Omega}-\partial^2_y-\lambda_n)w_n
				&=-\Phi_n((\Delta\chi)u_n)-2\Phi_n(\nabla_x\chi\cdot\nabla_xu_n)+\Phi_n(\chi f_n),\\
				w_n(\mathscr{R}x,0)
				&=w_n(x,L).
			\end{split}
		\end{equation}
		
		Choose $\omega\subset\Omega$ to be a small enough neighborhood of $\partial\Omega$ such that $\omega$ is contained in the set $\{\chi=0\}$ and is invariant under $\mathscr{R};$ for example $\omega=\Omega\setminus\Omega_{\varepsilon/2-\eta}$ would work. We have defined $\mathscr{C}_{\varphi}$ above; now let $\omega_{\varphi}:=\omega\times[0,L]/(x,L)\sim(\mathscr{R}x,0)$. Using \cite[Theorem~5.4]{CGM} again, we can still obtain
		\[
		\norm{w_n}_{L^2(\mathscr{C}_{\varphi})}\leq C\left(\norm{w_n}_{L^2(\omega_{\varphi})}+\norm{-\Phi_n((\Delta\chi)u_n)-2\Phi_n(\nabla_x\chi\cdot\nabla_xu_n)+\Phi_n(\chi f_n)}_{H^{-1}_xL^2_y(\mathscr{C}_{\varphi})}\right).
		\]
		Let
		\[
		g_n:=\Phi_n((\Delta\chi)u_n)+2\Phi_n(\nabla_x\chi\cdot\nabla_xu_n),
		\]
		then
		\[
		\norm{w_n}_{L^2(\mathscr{C}_{\varphi})}\leq C\left(\norm{w_n}_{L^2(\omega_{\varphi})}+\norm{g_n}_{H^{-1}_xL^2_y(\mathscr{C}_{\varphi})}+\norm{\Phi_n(\chi f_n)}_{H^{-1}_xL^2_y(\mathscr{C}_{\varphi})}\right).
		\]
		Using the same argument as in the rational polyhedra case together with the previous assumptions, we have
		\[
		\norm{w_n}_{L^2(\omega_{\varphi})}\to0~(n\to\infty)
		\]
		and
		\[
		\norm{\Phi_n(\chi f_n)}_{H^{-1}_xL^2_y(\mathscr{C}_{\varphi})}\leq\norm{\Phi_n(\chi f_n)}_{L^2_{x,y}(\mathscr{C}_{\varphi})}\to0~(n\to\infty).
		\]
		In addition, since $\Phi_n$ commutes with constant-coefficient differential operators, we have
		\[
		\norm{g_n}_{H^{-1}_xL^2_y(\mathscr{C}_{\varphi})}\leq C\left(\norm{\Phi_n((\Delta\chi)u_n)}_{L^2_{x,y}(\mathscr{C}_{\varphi})}+\norm{\Phi_n(u_n\nabla_x\chi)}_{L^2_{x,y}(\mathscr{C}_{\varphi})}\right).
		\]
		We have already shown that $\nu'=0$ on $(\Omega_{\varepsilon/2}\setminus\Omega_{\varepsilon/2+\eta})\times\mathbb{R}$, and $\operatorname{supp}(\nabla\chi)\subset\Omega_{\varepsilon/2}\setminus\Omega_{\varepsilon/2+\eta}$. Hence the two terms on the right-hand side of the above expression both tend to $0$ as $n\to\infty$, and consequently
		\[
		\norm{g_n}_{H^{-1}_x L^2_y(\mathscr{C}_{\varphi})}\to0~(n\to\infty).
		\]
		Thus we obtain
		\[
		\norm{w_n}_{L^2(\mathscr{C}_{\varphi})}\leq C\left(\norm{w_n}_{L^2(\omega_{\varphi})}+\norm{g_n}_{H^{-1}_xL^2_y(\mathscr{C}_{\varphi})}+\norm{\Phi_n(\chi f_n)}_{H^{-1}_xL^2_y(\mathscr{C}_{\varphi})}\right)\to0~(n\to\infty).
		\]
		This implies that $\nu'\equiv0,$ and further implies that $\mu\equiv0$ since the choice of $(z_0,\xi_0)\in\operatorname{supp}\mu$ was arbitrary. This contradicts the fact that $\mu$ is a probability measure. We now complete the proof.
	\end{proof}
	
	\section{Proof of Theorem~\ref{thm1.1}}
	\label{sec: Proof of Theorem1.1}
	In this section we give the proof of Theorem~\ref{thm1.1} using the resolvent estimate \eqref{eq: obsr resolv estim}. The proof essentially follows that of \cite[Theorem~7]{BZ04} (see also \cite{J18}); we apply the argument given there in an abstract setting to the concrete situation here.
	
	\begin{proof}[Proof of Theorem~\ref{thm1.1}]
		First, let $v(t)=e^{it\Delta}u_0$ be the solution of the homogeneous Schr\"{o}dinger equation with initial value $u_0$. Introduce a function $\psi(t)\in\mathcal{C}_0^{\infty}(\R_t)$ such that $\operatorname{supp}(\psi)\subset(0,1)$ and set $w(t)=\psi\left(\frac{t}{T}\right)v(t)$ where $T>0$ is a constant to be chosen. Then $w(t)$ satisfies the following equation:
		\begin{equation}
			(i\partial_t+\Delta)w(t)=\frac{i}{T}\psi'\left(\frac{t}{T}\right)v(t).
		\end{equation}
		Since $w(t)$ is compactly supported in $t$, taking the (adjoint) Fourier transform of the above equation with respect to $t$ yields
		\begin{equation}
			(-\Delta-\tau)\mathcal{F}_{t\to\tau}^*w(\tau)=-\frac{i}{T}\mathcal{F}_{t\to\tau}^*(\psi'(\cdot/T)v)(\tau).
		\end{equation}
		Here
		\[
		\mathcal{F}_{t\to\tau}^*w(\tau)=\int_{\R_t}e^{it\tau}w(t)dt
		\]
		denotes the adjoint Fourier transform of $w(t)$ with respect to $t$, and the same applies to $\mathcal{F}_{t\to\tau}^*(\psi'(\cdot/T)v)(\tau).$ Plancherel's theorem gives
		\[
		\norm{\mathcal{F}^*_{t\to\tau}w}^2_{L^2(\R_{\tau})}=2\pi\norm{w}^2_{L^2(\R_t)},\quad
		\norm{\mathcal{F}^*_{t\to\tau}(\psi'(\cdot/T)v)}^2_{L^2(\R_{\tau})}=2\pi\norm{\psi'(\cdot/T)v}^2_{L^2(\R_t)}.
		\]
		
		Let $\rho>0$ be a large constant such that for any $\langle\tau\rangle\geq\rho/2$, the $\mathcal{F}_{t\to\tau}^*w(\tau)$ satisfies \eqref{eq: obsr resolv estim} in Proposition~\ref{thm: obsr resolv estim}. For $\langle\tau\rangle\geq\rho/2,$ we estimate $\mathcal{F}_{t\to\tau}^*w(\tau)$ using \eqref{eq: obsr resolv estim} which gives
		\begin{equation}
			\label{eq: estimate for large tau}
			\norm{\mathcal{F}_{t\to\tau}^*w(\tau)}_{L^2(P)}\leq C\left(\frac{1}{T}\norm{\mathcal{F}_{t\to\tau}^*(\psi'(\cdot/T)v)(\tau)}_{L^2(P)}+\norm{\mathcal{F}_{t\to\tau}^*w(\tau)}_{L^2(U)}\right).
		\end{equation}
		For $\langle\tau\rangle<\rho/2,$ we choose $\chi\in\mathcal{C}_0^{\infty}((-1,1);[0,1])$ equal to $1$ on $[-1/2,1/2]$ and split $\mathcal{F}_{t\to\tau}^*w(\tau)$ into two parts by frequency truncation on $u_0$:
		\begin{equation}
			\begin{split}
				\mathcal{F}_{t\to\tau}^*w(\tau)
				&=\int_{\R_t}e^{it(\Delta+\tau)}\psi\left(\frac{t}{T}\right)(\chi(\Delta/\rho)u_0+(1-\chi(\Delta/\rho))u_0)\,dt\\
				&=J_1(\tau)+J_2(\tau),
			\end{split}
		\end{equation}
		where
		\begin{equation}
			J_1(\tau):=\int_{\R_t}e^{it(\Delta+\tau)}\psi\left(\frac{t}{T}\right)\chi(\Delta/\rho)u_0\,dt
		\end{equation}
		and
		\begin{equation}
			J_2(\tau):=\int_{\R_t}e^{it(\Delta+\tau)}\psi\left(\frac{t}{T}\right)(1-\chi(\Delta/\rho))u_0\,dt.
		\end{equation}
		Let
		\[
		I^{\rho}_{\tau}:=\{\tau:\langle\tau\rangle<\rho/2\}.
		\]
		For $J_1(\tau)$, by an application of Plancherel's theorem, we have
		\begin{equation}
			\label{eq: estimate of J_1}
			\begin{split}
				\norm{J_1(\tau)}^2_{L^2(I^{\rho}_{\tau},L^2(P))}
				&\leq\norm{J_1(\tau)}^2_{L^2(\R_{\tau},L^2(P))}\\
				&=2\pi\int_{\R_t}\psi\left(\frac{t}{T}\right)^2\norm{e^{it\Delta}\chi(\Delta/\rho)u_0}^2_{L^2(P)}dt\\
				&=2\pi T\left(\int_{\R_t}\psi(t)^2dt\right)\norm{\chi(\Delta/\rho)u_0}^2_{L^2(P)}\\
				&=2\pi T\norm{\psi(t)}^2_{L^2(\R_t)}\norm{\chi(\Delta/\rho)u_0}^2_{L^2(P)}.
			\end{split}
		\end{equation}
		For $J_2(\tau),$ we use the fact that for any positive integer $N,$
		\[
		e^{it(\Delta+\tau)}=(i(\Delta+\tau))^{-N}\partial^N_te^{it(\Delta+\tau)},
		\]
		so that by integrating by parts $N$ times we obtain
		\begin{equation}
			\begin{split}
				J_2(\tau)
				&=\int_{\R_t}e^{it(\Delta+\tau)}\psi\left(\frac{t}{T}\right)(1-\chi(\Delta/\rho))u_0\,dt\\
				&=\left(\frac{-1}{T}\right)^{N}\int_{\R_t}(i(\Delta+\tau))^{-N}e^{it(\Delta+\tau)}(1-\chi(\Delta/\rho))u_0\psi^{(N)}\left(\frac{t}{T}\right)dt.
			\end{split}
		\end{equation}
		We want to mention that the cutoff $\chi$ we chose ensures that in the above expression, when $\langle\tau\rangle<\rho/2$, the operator $(\Delta+\tau)^{-N}$ acting on $(1-\chi(\Delta/\rho))u_0$ is well-defined.
		Thus we have
		\begin{equation}
			\begin{split}
				\left|J_2(\tau)\right|
				&\leq\frac{1}{T^N}\int_{\R_t}\left|(\Delta+\tau)^{-N}(1-\chi(\Delta/\rho))u_0\psi^{(N)}\left(\frac{t}{T}\right)\right|dt\\
				&=\frac{1}{T^N}\left|(\Delta+\tau)^{-N}(1-\chi(\Delta/\rho))u_0\right|\int_{\R_t}\left|\psi^{(N)}\left(\frac{t}{T}\right)\right|dt\\
				&\leq\frac{\max_{t\in\R_t}\left|\psi^{(N)}(t)\right|}{T^{N-1}}\left|(\Delta+\tau)^{-N}(1-\chi(\Delta/\rho))u_0\right|.
			\end{split}
		\end{equation}
		Note that by functional calculus we have
		\[
		\norm{(\Delta+\tau)^{-N}(1-\chi(\Delta/\rho))u_0}_{L^2(P)}\leq C_N\langle\tau\rangle^{-N}\norm{(1-\chi(\Delta/\rho))u_0}_{L^2(P)}.
		\]
		Using this estimate we can get
		\begin{equation}
			\begin{split}
				\norm{J_2(\tau)}_{L^2(P)}
				&\leq\frac{C_N\max_{t\in\R_t}\left|\psi^{(N)}(t)\right|}{T^{N-1}}\langle\tau\rangle^{-N}\norm{(1-\chi(\Delta/\rho))u_0}_{L^2(P)}\\
				&\leq\frac{C_N\max_{t\in\R_t}\left|\psi^{(N)}(t)\right|}{T^{N-1}}\langle\tau\rangle^{-N}\norm{u_0}_{L^2(P)}.
			\end{split}
		\end{equation}
		Thus we have
		\begin{equation}
			\label{eq: estimate of J_2}
			\begin{split}
				\norm{J_2(\tau)}_{L^2(I^{\rho}_{\tau},L^2(P))}
				&\leq\frac{C_N\max_{t\in\R_t}\left|\psi^{(N)}(t)\right|}{T^{N-1}}\norm{\langle\tau\rangle^{-N}}_{L^2(\R_{\tau})}\norm{u_0}_{L^2(P)}\\
				&\leq\frac{C'_N\max_{t\in\R_t}\left|\psi^{(N)}(t)\right|}{T^{N-1}}\norm{u_0}_{L^2(P)}.
			\end{split}
		\end{equation}
		Now combining \eqref{eq: estimate for large tau}, \eqref{eq: estimate of J_1} and \eqref{eq: estimate of J_2} we can get
		\begin{equation}
			\label{eq: total estimate}
			\begin{split}
				\norm{\mathcal{F}^*w(\tau)}&^2_{L^2(\R_{\tau},L^2(P))}\\
				&\leq \frac{C}{T^2}\norm{\mathcal{F}_{t\to\tau}^*(\psi'(\cdot/T)v)(\tau)}^2_{L^2(\R_{\tau},L^2(P))}+C\norm{\mathcal{F}_{t\to\tau}^*w(\tau)}^2_{L^2(\R_{\tau},L^2(U))}\\
				&\quad+\norm{J_1(\tau)}^2_{L^2(I^{\rho}_{\tau},L^2(P))}+\norm{J_2(\tau)}^2_{L^2(I^{\rho}_{\tau},L^2(P))}\\
				&\leq \frac{C}{T^2}\norm{\mathcal{F}_{t\to\tau}^*(\psi'(\cdot/T)v)(\tau)}^2_{L^2(\R_{\tau},L^2(P))}+C\norm{\mathcal{F}_{t\to\tau}^*w(\tau)}^2_{L^2(\R_{\tau},L^2(U))}\\
				&\quad+2\pi T\norm{\psi(t)}^2_{L^2(\R_t)}\norm{\chi(\Delta/\rho)u_0}^2_{L^2(P)}+\frac{C'_N\max_{t\in\R_t}\left|\psi^{(N)}(t)\right|^2}{T^{2(N-1)}}\norm{u_0}^2_{L^2(P)}.
			\end{split}
		\end{equation}
		Note that by Plancherel's theorem we have
		\begin{equation}
			\norm{\mathcal{F}^*_{t\to\tau}w(\tau)}^2_{L^2(\R_{\tau},L^2(P))}=2\pi T\norm{\psi(t)}^2_{L^2(\R_t)}\norm{u_0}^2_{L^2(P)}
		\end{equation}
		and
		\begin{equation}
			\norm{\mathcal{F}^*_{t\to\tau}(\psi'(\cdot/T)v)(\tau)}^2_{L^2(\R_{\tau},L^2(P))}=2\pi T\norm{\psi'(t)}^2_{L^2(\R_t)}\norm{u_0}_{L^2(P)}^2.
		\end{equation}
		Substituting the above two equalities into \eqref{eq: total estimate} we obtain
		\begin{equation}
			\begin{split}
				\label{eq: total estimate for u_0}
				\norm{u_0}^2_{L^2(P)}
				&\leq\frac{C\norm{\psi'(t)}^2_{L^2(\R_t)}}{T^2\norm{\psi(t)}^2_{L^2(\R_t)}}\norm{u_0}^2_{L^2(P)}+\frac{C\max_{t\in\R_t}|\psi(t)|^2}{T\norm{\psi(t)}^2_{L^2(\R_t)}}\int_{0}^{T}\norm{e^{it\Delta}u_0}^2_{L^2(U)}dt\\
				&\quad+\norm{\chi(\Delta/\rho)u_0}^2_{L^2(P)}+\frac{C'_N\max_{t\in\R_t}\left|\psi^{(N)}(t)\right|^2}{T^{2N-1}\norm{\psi(t)}^2_{L^2(\R_t)}}\norm{u_0}^2_{L^2(P)}.
			\end{split}
		\end{equation}
		Now, as long as $\psi\not\equiv0$ is fixed, there exists $T_0>0$ such that when $T>T_0$ the first and last terms on the right-hand side of \eqref{eq: total estimate for u_0} can be absorbed into the left-hand side, and hence we obtain
		\begin{equation}
			\norm{u_0}^2_{L^2(P)}\leq C\int_{0}^{T}\norm{e^{it\Delta}u_0}^2_{L^2(U)}dt+C\norm{\chi(\Delta/\rho)u_0}^2_{L^2(P)}.
		\end{equation}
		By a simple observation we see that there exists a constant $C_{\rho}>0$ such that
		\[
		\norm{\chi(\Delta/\rho)u_0}^2_{L^2(P)}\leq C_{\rho}\norm{u_0}^2_{H^{-2}(P)}.
		\]
		Thus we have
		\begin{equation}
			\label{eq: obsr with error}
			\norm{u_0}^2_{L^2(P)}\leq C\int_{0}^{T}\norm{e^{it\Delta}u_0}^2_{L^2(U)}dt+C_{\rho}\norm{u_0}^2_{H^{-2}(P)}.
		\end{equation}
		
		Now we use the compactness-uniqueness argument which originates from \cite{BLR92} (see also \cite{BZ12}\cite{J18}) to eliminate the last term.
		
		Let us fix $\delta\geq0$ and define
		\[N_{\delta}:=\{u_0\in L^2(P):e^{it\Delta}u_0\equiv0\text{ on }(0,T-\delta)\times U\}.\]
		For a $u_0\in N_0$, define
		\[v_{\varepsilon,0}:=\frac{1}{\varepsilon}(e^{i\varepsilon\Delta}-I)u_0.\]
		Then if $\varepsilon\leq\delta,$ we have $v_{\varepsilon,0}\in N_{\delta}.$ We will show that $(v_{\varepsilon,0})$ is a Cauchy sequence in $L^2(P)$ and therefore converges. To see this, let $\{(\lambda_j,e_j)\}_{j=1}^{\infty}$ be a complete orthonormal basis of $L^2(P)$ formed by normalized eigenfunctions of the Laplacian, with associated eigenvalues $\lambda_j$. We expand $u_0$ in terms of this basis:
		\[u_0=\sum_{j=1}^{\infty}u_{0,j}e_j.\]
		Then for $\alpha,\beta\in(0,T/2),$ since $v_{\alpha,0},v_{\beta,0}\in N_{T/2},$ we have by \eqref{eq: obsr with error} (with $T$ replaced by $T/2$),
		\begin{equation*}
			\begin{split}
				\norm{v_{\alpha,0}-v_{\beta,0}}_{L^2(P)}^2
				&\leq C_{\rho}\norm{v_{\alpha,0}-v_{\beta,0}}_{H^{-2}(P)}^2\\
				&\leq C_{\rho}\sum_{j=1}^{\infty}\left|\frac{e^{-i\alpha\lambda_j}-1}{\alpha}-\frac{e^{-i\beta\lambda_j}-1}{\beta}\right|^2(1+\lambda_j)^{-2}|u_{0,j}|^2\\
				&\leq C_{\rho}\sum_{j=1}^{\infty}|\alpha-\beta|^2\lambda_j^2(1+\lambda_j)^{-2}|u_{0,j}|^2\\
				&\leq C_{\rho}|\alpha-\beta|^2\norm{u_0}_{L^2(P)}^2.
			\end{split}
		\end{equation*}
		Therefore, $\lim_{\alpha,\beta\rightarrow0}\norm{v_{\alpha,0}-v_{\beta,0}}_{L^2(P)}=0,$ which means that $(v_{\varepsilon,0})$ is a Cauchy sequence in $L^2(P).$ Then there exists $v_0\in L^2(P)$ such that
		\[L^2\text{-}\lim_{\varepsilon\rightarrow0}v_{\varepsilon,0}=v_0.\]
		This limit is necessarily in $N_{\delta}$ for any $\delta>0,$ hence in $N_0.$ By the definition of $v_{\varepsilon,0}$ we can get that \[v_0=i\Delta u_0.\] Hence $N_0$ is an invariant subspace of $\Delta$ in $L^2(P).$ From \eqref{eq: obsr with error} we see that the $L^2(P)$-norm and the $H^{-2}(P)$-norm are equivalent in $N_0$. Hence, by Rellich's theorem, the unit ball in $N_0$ is compact, and consequently, by Riesz's lemma, $N_0$ is finite-dimensional. If $N_0\neq\{0\},$ then there must exist an eigenvector $w$ of $-\Delta$ in $N_0$:
		\[-\Delta w=\lambda_0 w,\quad w|_{U}=0.\]
		By the unique continuation property of second-order elliptic differential operators, we have $w \equiv 0$, which yields a contradiction. Thus $N_0 = \{0\}$.
		
		Finally, we complete the proof of Theorem~\ref{thm1.1} by contradiction. If \eqref{eq: obsr in P} does not hold, then we can find a sequence $(u_{n,0})$ in $H^s(P)$ such that
		\[\norm{u_{n,0}}_{L^2(P)}=1,\quad\int_{0}^{T}\norm{e^{it\Delta}u_{n,0}}^2_{L^2(U)}dt\to0~(n\to\infty).\]
		Then we can extract a subsequence $(u_{n_k,0})$ converging weakly in $L^2(P)$ (and hence strongly in $H^{-2}(P)$) to a limit $u_0\in N_0.$ By \eqref{eq: obsr with error} we have
		\[1=\norm{u_{n_k,0}}^2_{L^2(P)}\leq C\int_{0}^{T}\norm{e^{it\Delta}u_{n_k,0}}^2_{L^2(U)}dt+C_{\rho}\norm{u_{n_k,0}}_{H^{-2}(P)}^2.\]
		Letting $k\rightarrow\infty$, we obtain
		\[1=\norm{u_0}^2_{L^2(P)}\leq C_{\rho}\norm{u_0}_{H^{-2}(P)}^2.\]
		This means that $u_0 \not\equiv 0$ and $u_0 \in N_0$. But we have already proved that $N_0 = \{0\}$, which yields a contradiction and thus completes the proof.
	\end{proof}
	
	\section{From Observability to Controllability: HUM}
	\label{sec: From Observability to Control}
	Now we use the classical HUM method developed by Lions in \cite{Lions88} to show that Theorem~\ref{thm1.1} implies Theorem~\ref{thm1.2}.
	\begin{proof}[Proof of Theorem~\ref{thm1.2}]
		For the system
		\begin{equation}
			\label{eq: null control}
			\left\{
			\begin{aligned}
				(i\partial_{t}+\Delta)u(t,z)=\mathbf{1}_{[0,T]\times U}g(t,z),\\
				u(0,z)=u_0,\quad u(T,z)\equiv0,
			\end{aligned}
			\right.
		\end{equation}
		where $g\in L^2([0,T];H^s(P))$ and $u_0\in H^s(P)$ with $s>d/2,$ we define the operator $R$ as
		\[
		\begin{aligned}
			R:L^2([0,T];H^s(P))&\to H^s(P)\\
			g&\mapsto u_0.
		\end{aligned}
		\]
		From the definition of $R$ we can see that to prove Theorem~\ref{thm1.2}, it is enough to prove that $R$ is surjective. Now consider the adjoint system of \eqref{eq: null control}:
		\begin{equation}
			\label{eq: adjoint system}
			\left\{
			\begin{aligned}
				(i\partial_{t}+\Delta)v(t,z)&=0,\\
				v(0,z)&=v_0,
			\end{aligned}
			\right.
		\end{equation}
		where $v_0\in H^s(P),$ and define the operator $S$ as
		\[
		\begin{aligned}
			S:H^s(P)&\to L^2([0,T];H^s(P))\\
			v_0&\mapsto e^{it\Delta}v_0.
		\end{aligned}
		\]
		Then, using the equation in \eqref{eq: null control} together with integration by parts, we obtain
		\[
		\begin{aligned}
			\langle \mathbf{1}_{[0,T]\times U}g,Sv_0\rangle_{L^2([0,T]\times P)}
			&=\int_{0}^{T}\int_{P}(i\partial_{t}+\Delta)u\cdot\overline{v}\,dzdt\\
			&=i\int_{P}(u\overline{v})|_{t=0}^{t=T}\,dz+\int_{0}^{T}\int_{P}u\cdot(-i\partial_{t}+\Delta)\overline{v}\,dzdt.
		\end{aligned}
		\]
		Since $u(T,z)\equiv0$ and by the definition of $R$, we have
		\[
		i\int_{P}(u\overline{v})|_{t=0}^{t=T}\,dz=-i\langle Rg,v_0\rangle_{L^2(P)}.
		\]
		Moreover, by the equation in \eqref{eq: adjoint system} we know that $(-i\partial_{t}+\Delta)\overline{v}=0.$ Thus we have
		\begin{equation}
			\label{eq: dual equa}
			\langle \mathbf{1}_{[0,T]\times U}g,Sv_0\rangle_{L^2([0,T]\times P)}=-i\langle Rg,v_0\rangle_{L^2(P)}.
		\end{equation}
		Now define the operator \[K:=-iR\circ S.\] Then by \eqref{eq: dual equa} we have, for any $u_0\in H^s(P)\,(s>d/2),$
		\[
		\begin{aligned}
			\langle Ku_0,u_0\rangle_{L^2(P)}
			&=-i\langle R\circ Su_0,u_0\rangle_{L^2(P)}\\
			&=\langle \mathbf{1}_{[0,T]\times U}Su_0,Su_0\rangle_{L^2([0,T]\times P)}\\
			&=\norm{Su_0}_{L^2([0,T]\times U)}^2\geq C\norm{u_0}_{L^2(P)}^2,
		\end{aligned}
		\]
		where the last inequality holds due to Theorem~\ref{thm1.1}. This implies that $K$ is a positive definite bounded self-adjoint operator, and hence an isomorphism on $H^s(P)\,(s>d/2).$
		
		Now define the control operator \[L:=-iS\circ K^{-1}.\] It is easy to check that for any $u_0\in H^s(P)\,(s>d/2),$
		\[R\circ Lu_0=u_0.\] Thus, taking $g=Lu_0$ in equation \eqref{eq: null control}, we can get that the solution $u$ of equation \eqref{eq: null control} satisfies $u(T,z)\equiv0$ (i.e., null controllability holds), which completes the proof.
	\end{proof}


\begin{thebibliography}{99}
		\bibitem{AR12} N. Anantharaman and G. Rivi\`ere, Dispersion and controllability for the Schr\"odinger equation on negatively curved manifolds, Anal. PDE {\bf 5} (2012), no.~2, 313--338.
		
		\bibitem{AM14} N. Anantharaman and F. Maci\`a, Semiclassical measures for the Schr\"odinger equation on the torus, J. Eur. Math. Soc. (JEMS) {\bf 16} (2014), no.~6, 1253--1288.
		
		\bibitem{ALM16} N. Anantharaman, M. L\'eautaud and F. Maci\`a, Wigner measures and observability for the Schr\"odinger equation on the disk, Invent. Math. {\bf 206} (2016), no.~2, 485--599.
		
		\bibitem{BLR92} C.~W. Bardos, G. Lebeau and J. Rauch, Sharp sufficient conditions for the observation, control, and stabilization of waves from the boundary, SIAM J. Control Optim. {\bf 30} (1992), no.~5, 1024--1065.
		
		\bibitem{BBZ13} J. Bourgain, N. Burq and M. Zworski, Control for Schr\"odinger operators on 2-tori: rough potentials, J. Eur. Math. Soc. (JEMS) {\bf 15} (2013), no.~5, 1597--1628.
		
		\bibitem{BZ04} N. Burq and M. Zworski, Geometric control in the presence of a black box, J. Amer. Math. Soc. {\bf 17} (2004), no.~2, 443--471.
		
		\bibitem{BZ05} N. Burq and M. Zworski, Bouncing ball modes and quantum chaos, SIAM Rev. {\bf 47} (2005), no.~1, 43--49.
		
		\bibitem{BZ12} N. Burq and M. Zworski, Control for Schr\"odinger operators on tori, Math. Res. Lett. {\bf 19} (2012), no.~2, 309--324.
		
		\bibitem{BZ19} N. Burq and M. Zworski, Rough controls for Schr\"odinger operators on 2-tori, Ann. H. Lebesgue {\bf 2} (2019), 331--347.
		
		\bibitem{CGM} M. Ceki\'c, B. Georgiev and M. Mukherjee, Polyhedral billiards, eigenfunction concentration and almost periodic control, Comm. Math. Phys. {\bf 377} (2020), no.~3, 2451--2487.
		
		\bibitem{DG02} C.~I. Delman and G.~A. Gal\textquotesingle perin, Billiards with pockets: a separation principle and bound for the number of orbit types, Comm. Math. Phys. {\bf 230} (2002), no.~3, 463--483.
		
		\bibitem{DJ18} S. Dyatlov and L. Jin, Semiclassical measures on hyperbolic surfaces have full support, Acta Math. {\bf 220} (2018), no.~2, 297--339.
		
		\bibitem{DJN} S. Dyatlov, L. Jin and S. Nonnenmacher, Control of eigenfunctions on surfaces of variable curvature, J. Amer. Math. Soc. {\bf 35} (2022), no.~2, 361--465.
		
		\bibitem{GKT} G.~A. Gal\textquotesingle perin, T. Kr\"uger and S.~E. Troubetzkoy, Local instability of orbits in polygonal and polyhedral billiards, Comm. Math. Phys. {\bf 169} (1995), no.~3, 463--473.
		
		\bibitem{Haraux} A. Haraux, S\'eries lacunaires et contr\^ole semi-interne des vibrations d'une plaque rectangulaire, J. Math. Pures Appl. (9) {\bf 68} (1989), no.~4, 457--465 (1990).
		
		\bibitem{HHM09} A. Hassell, L. Hillairet and J.~L. Marzuola, Eigenfunction concentration for polygonal billiards, Comm. Partial Differential Equations {\bf 34} (2009), no.~4-6, 475--485.
		
		\bibitem{Jaffard} S. Jaffard, Contr\^ole interne exact des vibrations d'une plaque rectangulaire, Portugal. Math. {\bf 47} (1990), no.~4, 423--429.
		
		\bibitem{J18} L. Jin, Control for Schr\"odinger equation on hyperbolic surfaces, Math. Res. Lett. {\bf 25} (2018), no.~6, 1865--1877.
		
		\bibitem{Komornik} V. Komornik, On the exact internal controllability of a Petrowsky system, J. Math. Pures Appl. (9) {\bf 71} (1992), no.~4, 331--342.
		
		\bibitem{Laur14} C. Laurent, Internal control of the Schr\"odinger equation, Math. Control Relat. Fields {\bf 4} (2014), no.~2, 161--186.
		
		\bibitem{Lebeau92} G. Lebeau, Contr\^ole de l'\'equation de Schr\"odinger, J. Math. Pures Appl. (9) {\bf 71} (1992), no.~3, 267--291.
		
		\bibitem{LZ82} B.~M. Levitan and V.~V. Zhikov, {\it Almost periodic functions and differential equations}, translated from the Russian by L. W. Longdon, Cambridge Univ. Press, Cambridge-New York, 1982.
		
		\bibitem{Lions88} J.-L. Lions, {\it Contr\^olabilit\'e{} exacte, perturbations et stabilisation de syst\`emes distribu\'es. Tome 1}, Recherches en Math\'ematiques Appliqu\'ees, 8, Masson, Paris, 1988.
		
		\bibitem{Maci09} F. Maci\`a, Semiclassical measures and the Schr\"odinger flow on Riemannian manifolds, Nonlinearity {\bf 22} (2009), no.~5, 1003--1020.
		
		\bibitem{Maci10}F. Maci\`a, High-frequency propagation for the Schr\"odinger equation on the torus, J. Funct. Anal. {\bf 258} (2010), no.~3, 933--955.
		
		\bibitem{Maci11} F. Maci\`a, The Schr\"odinger flow in a compact manifold: high-frequency dynamics and dispersion, in {\it Modern aspects of the theory of partial differential equations}, 275--289, Oper. Theory Adv. Appl. Adv. Partial Differ. Equ. (Basel), 216 , Birkh\"auser/Springer Basel AG, Basel.
		
		\bibitem{Mar06} J.~L. Marzuola, Eigenfunctions for partially rectangular billiards, Comm. Partial Differential Equations {\bf 31} (2006), no.~4-6, 775--790.
		
		\bibitem{Zwor12} M. Zworski, {\it Semiclassical analysis}, Graduate Studies in Mathematics, 138, Amer. Math. Soc., Providence, RI, 2012.
	\end{thebibliography}
\end{document}